\documentclass[12pt,twoside]{amsart}
\usepackage{amsmath, amsthm, amscd, amsfonts, amssymb, graphicx}
\usepackage{enumerate}
\usepackage[colorlinks=true,
linkcolor=blue,
urlcolor=cyan,
citecolor=red]{hyperref}
\usepackage{mathrsfs}
\addtolength{\topmargin}{-1.5cm}
\linespread {1.3}
\textwidth 17cm
\textheight 23cm
\addtolength{\hoffset}{-0.3cm}
\oddsidemargin 0cm
\evensidemargin 0cm
\setcounter{page}{1}
\newtheorem{theorem}{Theorem}[section]
\newtheorem{lemma}{Lemma}[section]
\newtheorem{remark}{Remark}[section]

\newtheorem{corollary}{Corollary}[section]

\numberwithin{equation}{section}

\begin{document}
	
\title{ Norm and numerical radius inequalities for operator matrices}
\author{Fuad Kittaneh, Hamid Reza Moradi and Mohammad Sababheh}
\subjclass[2010]{Primary 47A30, Secondary 47A12, 47B15}
\keywords{Numerical radius, norm inequality, operator matrix, accretive-dissipative matrices}

\begin{abstract}
Operator matrices have played a significant role in studying Hilbert space operators. In this paper, we discuss further properties of operator matrices and present new estimates for the operator norms and numerical radii of such operators. Moreover, operator matrices whose real and imaginary parts are positive will be discussed, and sharper bounds will be shown for such class.
\end{abstract}
\maketitle
\pagestyle{myheadings}
\markboth{\centerline {}}
{\centerline {}}
\bigskip
\bigskip
\section{Introduction and Preliminaries}
Let $\mathcal{H}$ be a complex Hilbert space and let $\mathcal{B}(\mathcal{H})$ be the $C^*-$algebra of all bounded linear operators on $\mathcal{H}$. If $T\in\mathcal{B}(\mathcal{H})$, the operator norm and numerical radius of $T$ are defined, respectively, as
$$\|T\|=\sup_{\|x\|=1}||Tx||=\sup_{\|x\|=\|y\|=1}|\left<Tx,y\right>|\;{\text{and}}\;\omega(T)=\sup_{\|x\|=1}|\left<Tx,x\right>|.$$
It is well known that $\omega(\cdot)$ is a norm on $\mathcal{B}(\mathcal{H})$, and this norm is equivalent to the operator norm, via \cite[Theorem 1.3-1]{gust}
\begin{equation}\label{14}
\frac{1}{2}\|T\|\leq\omega(T)\leq\|T\|.
\end{equation}
The first inequality in \eqref{14} becomes an equality when $T^2=O$, while the second inequality becomes an equality when $T$ is normal. Here $O$ denotes the zero operator.
Finding better estimates and more explicit relations among $\omega(T)$ and $\|T\|$ has been in the core interest of numerous researchers in this field. This is due to the importance of both quantities in understanding the geometry of Hilbert spaces and in other applications in operator theory, mathematical analysis, and physics. 

As one can see in \cite{4,gum,3}, operator matrices have played a significant role in obtaining further and sharper relations among the operator norm and the numerical radius. Here, we recall that if $A_i\in\mathcal{B}(\mathcal{H})$ for $i=1,2,3,4$, then the operator matrix $\left[\begin{array}{cc}A_1&A_2\\A_3&A_4\end{array}\right]\in\mathcal{B}(\mathcal{H}\oplus\mathcal{H})$. 

In this paper, we aim to present new estimates for the numerical radius and the operator norm of operator matrices. Such estimates enrich our knowledge and enlarge our helpful tools to tickle operator inequalities.

To proceed to our results, we will need some lemmas.

\begin{lemma}\label{lem_shebr}
\cite[Theorem 3.7]{3}
Let $X,Y,Z,W\in\mathcal{B}(\mathcal{H})$. Then
$$\omega\left(\left[\begin{array}{cc}X&Y\\Z&W\end{array}\right]\right)\geq \max\left(\omega(X),\omega(W),\frac{\omega(Y+Z)}{2},\frac{\omega(Y-Z)}{2}\right)$$ and
$$\omega\left(\left[\begin{array}{cc}X&Y\\Z&W\end{array}\right]\right)\leq \max\left(\omega(X),\omega(W)\right)+\frac{\omega(Y+Z)+\omega(Y-Z)}{2}.$$
\end{lemma}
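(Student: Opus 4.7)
The plan is to separate the diagonal and off-diagonal parts of $T:=\left[\begin{smallmatrix}X&Y\\Z&W\end{smallmatrix}\right]$ and reduce both bounds to the single quantity $\omega(S)$, where $S:=\left[\begin{smallmatrix}0&Y\\Z&0\end{smallmatrix}\right]$. With $V:=\mathrm{diag}(I,-I)$, which is self-adjoint and unitary, one has $V^*TV=\left[\begin{smallmatrix}X&-Y\\-Z&W\end{smallmatrix}\right]$, so the averages $\tfrac12(T+V^*TV)=\mathrm{diag}(X,W)$ and $\tfrac12(T-V^*TV)=S$ each have numerical radius at most $\omega(T)$ (apply the triangle inequality together with $\omega(V^*TV)=\omega(T)$). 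Using $\omega(\mathrm{diag}(X,W))=\max(\omega(X),\omega(W))$, this already yields $\omega(T)\ge\max(\omega(X),\omega(W))$; moreover, decomposing $T=\mathrm{diag}(X,W)+S$ and invoking the triangle inequality gives $\omega(T)\le\max(\omega(X),\omega(W))+\omega(S)$. Both directions of the lemma therefore reduce to bounding $\omega(S)$ from above and below.

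For the lower bound on $\omega(S)$, I would conjugate $S$ by the Hadamard-type unitary $U_0:=\tfrac{1}{\sqrt2}\left[\begin{smallmatrix}I&I\\I&-I\end{smallmatrix}\right]$: a direct computation shows the $(1,1)$ entry of $U_0^*SU_0$ equals $\tfrac12(Y+Z)$, and since $\omega(M)\ge\omega(M_{11})$ for any block operator $M$ (test with vectors of the form $(x,0)$), this gives $\omega(S)\ge\tfrac12\omega(Y+Z)$. To capture $Y-Z$, I would first conjugate $S$ by the phase unitary $\mathrm{diag}(iI,I)$, transforming it into $\left[\begin{smallmatrix}0&-iY\\iZ&0\end{smallmatrix}\right]$ (same numerical radius), then apply $U_0$; the $(1,1)$ entry becomes $\tfrac{i}{2}(Z-Y)$, and since unimodular scalars drop out of $\omega$, we obtain $\omega(S)\ge\tfrac12\omega(Y-Z)$.

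For the upper bound on $\omega(S)$, I would use the splitting
\[
S=\left[\begin{array}{cc}0&\tfrac{Y+Z}{2}\\ \tfrac{Y+Z}{2}&0\end{array}\right]+\left[\begin{array}{cc}0&\tfrac{Y-Z}{2}\\ -\tfrac{Y-Z}{2}&0\end{array}\right].
\]
Conjugation by $U_0$ sends the first summand to $\mathrm{diag}\bigl(\tfrac{Y+Z}{2},-\tfrac{Y+Z}{2}\bigr)$, whose numerical radius equals $\tfrac12\omega(Y+Z)$; a combined phase-and-Hadamard conjugation sends the second summand to the analogous diagonal matrix for $\tfrac{Y-Z}{2}$ up to a unimodular factor, giving numerical radius $\tfrac12\omega(Y-Z)$. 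The triangle inequality then yields $\omega(S)\le\tfrac12(\omega(Y+Z)+\omega(Y-Z))$. The only genuine obstacle is algebraic bookkeeping: one must identify the correct unitaries (namely $U_0$ and the diagonal phase $\mathrm{diag}(iI,I)$) that separate the symmetric and skew off-diagonal parts and produce the combinations $Y\pm Z$; once the two identities $\omega\bigl(\left[\begin{smallmatrix}0&P\\P&0\end{smallmatrix}\right]\bigr)=\omega(P)$ and $\omega\bigl(\left[\begin{smallmatrix}0&Q\\-Q&0\end{smallmatrix}\right]\bigr)=\omega(Q)$ are in hand, every remaining step is routine.
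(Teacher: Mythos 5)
The paper offers no proof of this lemma; it is quoted directly from \cite[Theorem 3.7]{3}, so the only comparison available is with that source. Your argument is correct and complete, and it follows essentially the same standard route used there: averaging $T$ against its conjugate by $\mathrm{diag}(I,-I)$ to isolate the diagonal and off-diagonal parts, and conjugating the off-diagonal part by the Hadamard-type unitary (together with a diagonal phase) to produce the combinations $Y\pm Z$.
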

Another useful lower bound for the numerical radius of an operator matrix can be stated as a pinching inequality as follows. It should be noted that this estimate is sharper than the first estimate in the above lemma as one can see in the proof of \cite[Theorem 3.7]{3}.
\begin{lemma}\label{lem_w_diag}
\cite[Lemma 3.1]{3}
Let $A_i\in\mathcal{B}(\mathcal{H})$ for $i=1,2,3,4$. Then
$$\omega\left(\left[\begin{array}{cc} A_1&A_2\\A_3&A_4\end{array}\right]\right)\geq \max\left\{\omega\left(\left[\begin{array}{cc} A_1&O\\O&A_4\end{array}\right]\right),\omega\left(\left[\begin{array}{cc} O&A_2\\A_3&O\end{array}\right]\right)\right\}.$$
\end{lemma}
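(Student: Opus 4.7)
The plan is to use a pinching-type argument based on the unitary invariance of the numerical radius together with its subadditivity. Set $T=\left[\begin{smallmatrix}A_1&A_2\\A_3&A_4\end{smallmatrix}\right]$ and introduce the self-adjoint unitary $U=\left[\begin{smallmatrix}I&O\\O&-I\end{smallmatrix}\right]\in\mathcal{B}(\mathcal{H}\oplus\mathcal{H})$. A direct block computation gives
$$UTU^{*}=\left[\begin{array}{cc}A_1&-A_2\\-A_3&A_4\end{array}\right],$$
so that the even and odd combinations under this conjugation split $T$ into its block-diagonal and block-off-diagonal parts:
$$\frac{T+UTU^{*}}{2}=\left[\begin{array}{cc}A_1&O\\O&A_4\end{array}\right],\qquad \frac{T-UTU^{*}}{2}=\left[\begin{array}{cc}O&A_2\\A_3&O\end{array}\right].$$

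Next I would invoke two standard facts about $\omega(\cdot)$: it is unitarily invariant, so $\omega(UTU^{*})=\omega(T)$, and it is a norm on $\mathcal{B}(\mathcal{H}\oplus\mathcal{H})$, so the triangle inequality applies. Combining these with the two identities above yields
$$\omega\!\left(\left[\begin{array}{cc}A_1&O\\O&A_4\end{array}\right]\right)\le\frac{\omega(T)+\omega(UTU^{*})}{2}=\omega(T),$$
and, in exactly the same way,
$$\omega\!\left(\left[\begin{array}{cc}O&A_2\\A_3&O\end{array}\right]\right)\le\frac{\omega(T)+\omega(UTU^{*})}{2}=\omega(T).$$
Taking the maximum of the two bounds produces the claimed inequality.

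There is no serious obstacle here; the only thing to notice is the right choice of unitary, namely $U=\mathrm{diag}(I,-U_0)$ with $U_0=I$, which produces the sign change on the off-diagonal blocks needed to separate $T$ into its two pinched components. Everything else reduces to the triangle inequality for the norm $\omega$ and its invariance under unitary conjugation, both of which are standard. The comparison with Lemma~\ref{lem_shebr} that is asserted in the surrounding text would then follow by applying Lemma~\ref{lem_shebr} itself to each of the two pinched blocks appearing on the right-hand side, but this comparison is not needed for the proof of the stated lower bound.
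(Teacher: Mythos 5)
Your proof is correct: the conjugation by $U=\left[\begin{smallmatrix}I&O\\O&-I\end{smallmatrix}\right]$ does split $T$ into its block-diagonal and block-off-diagonal parts, and the weak unitary invariance of $\omega$ together with the triangle inequality gives both bounds. The paper offers no proof of its own (it only cites \cite[Lemma 3.1]{3}), and your argument is precisely the standard pinching argument by which that cited lemma is established, so nothing further is needed.
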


Calculating the exact value of the numerical radius is, in general, not an easy task. However, for $2\times 2$ matrices, it can be easily found using the following formula.
\begin{lemma}\label{01}
\cite[Theorem 1]{johnson}
Let $X=\left[ \begin{matrix}
   a & b  \\
   c & d  \\
\end{matrix} \right]$ with $a,b,c,d\in \mathbb{R}$. Then
\[\omega \left( X \right)=\frac{\left| a+d \right|+\sqrt{{{\left( a-d \right)}^{2}}+{{\left( b+c \right)}^{2}}}}{2}.\]
\end{lemma}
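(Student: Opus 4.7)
The plan is to use the standard identity
$$\omega(X) \;=\; \sup_{\theta \in [0, 2\pi)} \lambda_{\max}\bigl(\operatorname{Re}(e^{i\theta}X)\bigr), \qquad \operatorname{Re}(T) := \tfrac{T+T^*}{2},$$
which reduces the computation of the numerical radius of a $2 \times 2$ matrix to maximizing the top eigenvalue of a one-parameter family of Hermitian $2 \times 2$ matrices; those eigenvalues are given in closed form by $\lambda_{\pm} = \tfrac{1}{2}\operatorname{tr} \pm \sqrt{(\tfrac{1}{2}\operatorname{tr})^2 - \det}$.

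First I would split $X = S + A$ into its symmetric and antisymmetric parts, $S = \tfrac{1}{2}(X+X^T)$ and $A = \tfrac{1}{2}(X-X^T)$. Since $X$ is real, $X^* = X^T$, and a direct expansion gives
$$\operatorname{Re}(e^{i\theta}X) \;=\; \cos\theta\, S \,+\, i\sin\theta\, A,$$
which is Hermitian because $iA$ is Hermitian when $A$ is real and antisymmetric. Computing the trace $(a+d)\cos\theta$ and the determinant of this matrix, the $2\times 2$ Hermitian eigenvalue formula yields
$$\lambda_{\max}\bigl(\operatorname{Re}(e^{i\theta}X)\bigr) \;=\; \frac{(a+d)\cos\theta}{2} \,+\, \frac{1}{2}\sqrt{\bigl[(a-d)^2 + (b+c)^2\bigr]\cos^2\theta + (b-c)^2\sin^2\theta}.$$

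Second, I would take the supremum over $\theta$. Plugging in $\theta = 0$ (or $\theta = \pi$, to match the sign of $a+d$) already produces the claimed right-hand side, so the ``$\geq$'' direction is immediate. For the matching upper bound I would estimate $|\cos\theta| \leq 1$ in the first summand and argue that the quantity under the radical is also maximized at $\theta = 0$.

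The main obstacle is the last comparison inside the square root, since its success requires the coefficient $(a-d)^2 + (b+c)^2$ of $\cos^2\theta$ to dominate the coefficient $(b-c)^2$ of $\sin^2\theta$; this is a short monotonicity check in $\sin^2\theta$ once set up correctly and is the only nontrivial calculation after the reduction via the rotation identity. Combining the matching lower and upper bounds on $\lambda_{\max}$ and taking $\sup_\theta$ then delivers the stated identity for $\omega(X)$.
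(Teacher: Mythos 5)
Your reduction to $\omega(X)=\sup_{\theta}\lambda_{\max}\!\left(\tfrac{1}{2}(e^{\textup i\theta}X+e^{-\textup i\theta}X^{*})\right)$ and your closed form
\[
\lambda_{\max}\left(\tfrac{1}{2}(e^{\textup i\theta}X+e^{-\textup i\theta}X^{*})\right)=\frac{(a+d)\cos \theta }{2}+\frac{1}{2}\sqrt{\big[(a-d)^{2}+(b+c)^{2}\big]\cos ^{2}\theta +(b-c)^{2}\sin ^{2}\theta }
\]
are both correct (the paper offers no proof of this lemma, only the citation, so your argument is evaluated on its own). The genuine gap is precisely the step you defer as a ``short monotonicity check.'' Writing $s=\sin^{2}\theta$, the quantity under the radical is $\big[(a-d)^{2}+(b+c)^{2}\big]+s\big[(b-c)^{2}-(a-d)^{2}-(b+c)^{2}\big]$, so it is maximized at $\theta=0$ if and only if $(b-c)^{2}\le (a-d)^{2}+(b+c)^{2}$, equivalently $(a-d)^{2}+4bc\ge 0$. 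This is \emph{not} true for arbitrary real entries, and where it fails the asserted identity itself can fail: for $X=\left[ \begin{matrix} 0 & 1 \\ -1 & 0 \end{matrix} \right]$ the right-hand side of the lemma equals $0$, while $X$ is normal with eigenvalues $\pm\textup i$, so $\omega(X)=1$; indeed your own expression reduces to $\lambda_{\max}=|\sin\theta|$, which is maximized at $\theta=\pi/2$, not at $\theta=0$. No argument can close the deferred step for the statement as transcribed.

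The resolution is that Theorem 1 of Johnson--Spitkovsky--Gottlieb carries the hypothesis that the entries are nonnegative, which the lemma as quoted here has silently dropped. Under $bc\ge 0$ (in particular for $a,b,c,d\ge 0$) one has $(b-c)^{2}\le (b+c)^{2}\le (a-d)^{2}+(b+c)^{2}$, your monotonicity check goes through, and combining it with $|\cos\theta|\le 1$ in the first summand and the choice $\theta\in\{0,\pi\}$ for the lower bound completes the proof exactly as you outline. Since the paper only ever applies the lemma to matrices whose entries are operator norms, hence nonnegative, the downstream use is unaffected; but your proof must state the hypothesis $bc\ge 0$ (or $(a-d)^{2}+4bc\ge 0$) explicitly, as it is needed both for your argument and for the truth of the formula in general.
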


We recall that if $T\in\mathcal{B}(\mathcal{H})$ is such that $\left<Tx,x\right>\geq 0$ for all $x\in\mathcal{H}$, then $T$ is said to be positive, and we write $T\geq O$ in this case. The following is a characterization of positivity  of a certain operator matrix, in terms of Cauchy-Shwarz type inequality.
\begin{lemma}\label{011}
\cite[Lemma 1]{04} Let $A,B,C\in \mathcal B\left( \mathcal H \right)$ be such that $A,B\geq O$.  Then 
\[\left[ \begin{matrix}
   A & C  \\
   {{C}^{*}} & B  \\
\end{matrix} \right]\ge O\Leftrightarrow {{\left| \left\langle Cx,y \right\rangle  \right|}^{2}}\le \left\langle Ax,x \right\rangle \left\langle By,y \right\rangle,\forall x,y\in\mathcal{H}.\]
\end{lemma}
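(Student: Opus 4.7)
The plan is to prove both implications directly and by elementary means: a quadratic-parameter discriminant argument for the forward direction, and a direct expansion followed by the AM-GM inequality for the backward direction.

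For $(\Rightarrow)$, assume the block operator is positive. I would test its quadratic form on vectors of the shape $\left[\begin{matrix} x \\ \lambda y\end{matrix}\right]$, where $x,y\in\mathcal{H}$ are fixed and $\lambda\in\mathbb{C}$ is a free parameter. A direct expansion gives
\[
\langle Ax,x\rangle + 2\mathrm{Re}\bigl(\lambda\langle Cy,x\rangle\bigr) + |\lambda|^{2}\langle By,y\rangle \geq 0.
\]
Choosing the phase of $\lambda$ so that $\lambda\langle Cy,x\rangle$ is a nonpositive real, this becomes a real quadratic in $t:=|\lambda|$ that is nonnegative for every $t\in\mathbb{R}$; hence its discriminant is nonpositive, which is exactly the claimed Cauchy-Schwarz-type inequality once the inner-product slots are identified (using $|\langle Cy,x\rangle|=|\langle C^{*}x,y\rangle|$ as needed).

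For $(\Leftarrow)$, assume $A,B\geq O$ together with the Cauchy-Schwarz-type bound. I would evaluate the block form on a generic $\left[\begin{matrix} x \\ y\end{matrix}\right]$, obtaining $\langle Ax,x\rangle + 2\mathrm{Re}\langle Cy,x\rangle + \langle By,y\rangle$. The cross term is bounded below by $-2|\langle Cy,x\rangle|$, and the hypothesis then yields $-2|\langle Cy,x\rangle|\geq -2\sqrt{\langle Ax,x\rangle\langle By,y\rangle}$. What remains is
\[
\langle Ax,x\rangle - 2\sqrt{\langle Ax,x\rangle\langle By,y\rangle} + \langle By,y\rangle = \bigl(\sqrt{\langle Ax,x\rangle}-\sqrt{\langle By,y\rangle}\bigr)^{2}\geq 0,
\]
i.e.\ AM-GM in disguise, which closes positivity of the block operator.

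The main obstacle is purely bookkeeping: pairing the correct vectors with the correct operators so that the forward-direction discriminant produces the scalar inequality in precisely the stated form, rather than a variant with $x$ and $y$ swapped between the $A$- and $B$-slots. Apart from this careful slot-tracking, the mathematical content is entirely elementary, and no heavier machinery (e.g.\ Schur complements or Douglas-type factorization) is needed.
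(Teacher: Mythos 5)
The paper itself offers no proof of this lemma (it is quoted verbatim from Kittaneh's 1988 paper), so the only benchmark is the standard argument, which is exactly the one you propose: expand the quadratic form on $\bigl[\begin{smallmatrix} x \\ \lambda y\end{smallmatrix}\bigr]$ and use a discriminant for $(\Rightarrow)$, then reverse the steps with AM--GM for $(\Leftarrow)$. That template is correct and is the right level of technology.

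However, the issue you dismiss as ``purely bookkeeping'' is in fact the crux, and it cannot be resolved in favour of the statement as printed. Your forward computation yields, for all $x,y$,
\[
|\langle Cy,x\rangle|^{2}\le\langle Ax,x\rangle\langle By,y\rangle,
\qquad\text{equivalently}\qquad
|\langle Cx,y\rangle|^{2}\le\langle Ay,y\rangle\langle Bx,x\rangle,
\]
in which $C$ acts on the vector tested against $B$. The lemma as printed asserts instead $|\langle Cx,y\rangle|^{2}\le\langle Ax,x\rangle\langle By,y\rangle$, with $C$ acting on the vector tested against $A$. These are genuinely different conditions, and the printed one is false: on $\mathcal H=\mathbb C^{2}$ take $A=\mathrm{diag}(1,0)$, $B=\mathrm{diag}(0,1)$, $C=\bigl[\begin{smallmatrix}0&1\\0&0\end{smallmatrix}\bigr]$; then $\bigl[\begin{smallmatrix}A&C\\C^{*}&B\end{smallmatrix}\bigr]$ is the rank-one positive matrix $vv^{*}$ with $v=(1,0,0,1)^{T}$, yet with $x=e_{2}$, $y=e_{1}$ one gets $|\langle Cx,y\rangle|=1$ while $\langle Ax,x\rangle\langle By,y\rangle=0$. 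The identity $|\langle Cy,x\rangle|=|\langle C^{*}x,y\rangle|$ that you invoke turns your inequality into a statement about $C^{*}$, not $C$, so no relabelling of slots recovers the printed form. The same slip is hidden in your $(\Leftarrow)$ direction, where the bound $|\langle Cy,x\rangle|\le\sqrt{\langle Ax,x\rangle\langle By,y\rangle}$ is not an instance of the printed hypothesis under any substitution. In short: your argument is sound and proves the corrected lemma (which is what Kittaneh's Lemma~1 actually says), and the statement here should be read with the $A$- and $B$-slots interchanged; in the paper's only application of the lemma one takes $x=y$, where the distinction disappears, so nothing downstream is affected.
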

We notice that for an operator matrix to be positive, the off-diagonal operators must be the conjugates of each other. In the following, we present a simple observation about the positivity of a certain transformation of operator matrices.
\begin{lemma}\label{5}
Let ${{T}_{i}}\in \mathcal B\left( \mathcal H \right)$ for $i=1,2,3,4$ be such that $\left[ \begin{matrix}
   {{T}_{1}} & {{T}_{2}}  \\
   T_{2}^{*} & {{T}_{3}}  \\
\end{matrix} \right]\geq O$. Then $\left[ \begin{matrix}
   t{{T}_{1}} & {{T}_{2}}  \\
   T_{2}^{*} & \frac{{{T}_{3}}}{t}  \\
\end{matrix} \right]\geq O$  for all $t>0$.
\end{lemma}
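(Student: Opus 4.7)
The plan is to prove this by exhibiting the second matrix as a congruence transform of the first. Set
\[
D=\left[\begin{matrix} \sqrt{t}\,I & O \\ O & \tfrac{1}{\sqrt{t}}\,I\end{matrix}\right],
\]
which is well-defined and self-adjoint because $t>0$. A direct block multiplication will show that
\[
D^{*}\left[\begin{matrix} T_{1} & T_{2} \\ T_{2}^{*} & T_{3}\end{matrix}\right] D=\left[\begin{matrix} tT_{1} & T_{2} \\ T_{2}^{*} & \tfrac{T_{3}}{t}\end{matrix}\right].
\]
Since congruence by any bounded operator preserves positivity (if $M\geq O$ then $\langle D^{*}MD\,\xi,\xi\rangle=\langle M(D\xi),D\xi\rangle\geq 0$), the conclusion follows immediately.

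As a sanity check, I would verify the same conclusion via Lemma~\ref{011}: the hypothesis gives $T_{1},T_{3}\geq O$ together with $|\langle T_{2}x,y\rangle|^{2}\leq\langle T_{1}x,x\rangle\langle T_{3}y,y\rangle$ for all $x,y\in\mathcal{H}$. For any $t>0$, the new diagonal entries $tT_{1}$ and $T_{3}/t$ are still positive, and
\[
\langle tT_{1}x,x\rangle\langle\tfrac{T_{3}}{t}y,y\rangle=\langle T_{1}x,x\rangle\langle T_{3}y,y\rangle,
\]
so the Cauchy--Schwarz bound is unchanged, and Lemma~\ref{011} delivers positivity of the transformed matrix.

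There is really no hard step here; the only thing to be careful about is that the scalar factor $\sqrt{t}$ cancels against $1/\sqrt{t}$ on the off-diagonal blocks, which is why the off-diagonal entries are preserved while the diagonal entries get scaled by $t$ and $1/t$. I would write the proof in the congruence form, since it is shorter and does not require invoking Lemma~\ref{011}.
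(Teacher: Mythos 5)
Your proof is correct and is essentially identical to the paper's: both conjugate the given positive matrix by the diagonal operator $\operatorname{diag}(\sqrt{t}\,I,\tfrac{1}{\sqrt{t}}\,I)$ and invoke the fact that congruence preserves positivity. The additional verification via Lemma~\ref{011} is a fine consistency check but adds nothing needed.
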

\begin{proof}
Recall that if $T\geq O$ and if $A\in\mathcal{B}(\mathcal{H})$ is arbitrary, then $ATA^*\geq O$. Then the result follows immediately noting that 
$$\left[ \begin{matrix}
   t{{T}_{1}} & {{T}_{2}}  \\
   T_{2}^{*} & \frac{{{T}_{3}}}{t}  \\
\end{matrix} \right]=\left[ \begin{matrix}
   \sqrt{t} & O  \\
   O & \sqrt{\frac{1}{t}}  \\
\end{matrix} \right]\left[ \begin{matrix}
   {{T}_{1}} & {{T}_{2}}  \\
   T_{2}^{*} & {{T}_{3}}  \\
\end{matrix} \right]\left[ \begin{matrix}
   \sqrt{t} & O  \\
   O & \sqrt{\frac{1}{t}}  \\
\end{matrix} \right].$$
\end{proof}

If $T\in\mathcal{B}(\mathcal{H})$ is arbitrary, then the real and imaginary parts of $T$ are defined respectively by $\mathcal{R}T=\frac{T+T^*}{2}$ and $\mathcal{I}T=\frac{T-T^*}{2\textup i}$. The Cartesian decomposition of $T$ is then defined by the form $T=\mathcal{R}T+{\textup i}\mathcal{I}T.$\\
Among the basic relations among the operator norm and the numerical radius, we have
\begin{equation}\label{eq_real}
\|\mathcal{R}T\|\leq \omega(T)\;{\text{and}}\;\|\mathcal{I}T\|\leq\omega(T).
\end{equation}
This follows immediately noting 
$$\left\| \mathcal RT \right\|=\left\| \frac{T+{{T}^{*}}}{2} \right\|=\omega \left( \frac{T+{{T}^{*}}}{2} \right)\le \frac{1}{2}\left( \omega \left( T \right)+\omega \left( {{T}^{*}} \right) \right)=\omega \left( T \right).$$ 
A more general class than that of positive operators is the class of accretive operators. Recall that an operator $T\in\mathcal{B}(\mathcal{H})$ is said to be accretive if $\mathcal{R}T\geq O$. Further, if $\mathcal{I}T\geq O$, then $T$ is said to be accretive-dissipative. The literature has verified that accretive and accretive-dissipative operators satisfy better bounds than arbitrary operators. We refer the reader to \cite{moradi, bks} as a sample of references emphasizing this observation.

Now, if $T:=\left[\begin{array}{cc} T_{11}&T_{12}\\ T_{21}&T_{22}\end{array}\right]\in\mathcal{B}(\mathcal{H}\oplus\mathcal{H})$, then the Cartesian decomposition of $T$ is \cite{gum}
 \begin{equation}\label{05}
\left[ \begin{matrix}
   {{T}_{11}} & {{T}_{12}}  \\
   {{T}_{21}} & {{T}_{22}}  \\
\end{matrix} \right]=\left[ \begin{matrix}
   {{A}_{11}} & {{A}_{12}}  \\
   {{A}_{21}} & {{A}_{22}}  \\
\end{matrix} \right]+\textup i\left[ \begin{matrix}
   {{B}_{11}} & {{B}_{12}}  \\
   {{B}_{21}} & {{B}_{22}}  \\
\end{matrix} \right],\text{ }{{A}_{12}}=A_{21}^{*},{{B}_{12}}=B_{21}^{*}
\end{equation}
 in which ${{T}_{jk}},{{A}_{jk}},{{B}_{jk}}\in \mathcal{B}(\mathcal{H})$ for $j,k=1,2$. We notice the necessity of the conditions ${{A}_{12}}=A_{21}^{*},{{B}_{12}}=B_{21}^{*}$ since the real and imaginary parts of an operator are necessarily self-adjoint. Also, we notice that $A_{ii}$ and $B_{ii}$ are self adjoint, for $i=1,2.$

In the next section, we present several relations for operator matrices partitioned as in \eqref{05}. This includes connections between the numerical radii and the operator norms of such operators and their components, including arbitrary and accretive-dissipative ones. After that, a more general discussion is presented about the operator norm of these operators. 
 
\section{Bounds for the numerical radius}

The following simple lemma will be needed to prove the upcoming result.
\begin{lemma}\label{04}
Let $T\in \mathcal{B}(\mathcal{H}\oplus\mathcal{H})$ be partitioned as in \eqref{05}. Then
\[\left\|{{A}_{12}} \right\|\le \omega \left( T \right).\]
\end{lemma}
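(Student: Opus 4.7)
The plan is to recognize $A_{12}$ as the off-diagonal block of the real part $\mathcal{R}T$ and then exploit two standard facts: (i) the norm of any block of an operator matrix is dominated by the norm of the whole matrix, and (ii) the norm of the real part of an operator is dominated by its numerical radius (the second inequality in \eqref{eq_real}).

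More concretely, I would first write down the Cartesian decomposition \eqref{05} explicitly. Since $\mathcal{R}T=\frac{T+T^{*}}{2}$, a direct block computation gives
\[
\mathcal{R}T=\left[\begin{matrix} \mathcal{R}T_{11} & \frac{T_{12}+T_{21}^{*}}{2} \\ \frac{T_{21}+T_{12}^{*}}{2} & \mathcal{R}T_{22}\end{matrix}\right],
\]
so that the $(1,2)$-block of $\mathcal{R}T$ is exactly $A_{12}=\frac{T_{12}+T_{21}^{*}}{2}$, matching the notation in \eqref{05}.

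Next, I would show the general fact that $\|A_{12}\|\leq \|\mathcal{R}T\|$. For any unit vectors $x,y\in\mathcal{H}$, the vectors $\left[\begin{smallmatrix}0\\x\end{smallmatrix}\right]$ and $\left[\begin{smallmatrix}y\\0\end{smallmatrix}\right]$ are unit vectors in $\mathcal{H}\oplus\mathcal{H}$, and
\[
\langle A_{12}x,y\rangle=\left\langle \mathcal{R}T\left[\begin{matrix}0\\x\end{matrix}\right],\left[\begin{matrix}y\\0\end{matrix}\right]\right\rangle.
\]
Taking the supremum over such $x,y$ yields $\|A_{12}\|\leq\|\mathcal{R}T\|$.

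Finally, invoking \eqref{eq_real} to conclude $\|\mathcal{R}T\|\leq \omega(T)$ chains everything together to give $\|A_{12}\|\leq\omega(T)$. There really isn't a hard step here; the only thing to be careful about is making the identification of $A_{12}$ with the $(1,2)$-block of $\mathcal{R}T$ explicit, so that the standard submatrix-norm bound can be applied without ambiguity.
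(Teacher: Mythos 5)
Your proof is correct, and it passes through the same waypoint as the paper --- the bound $\left\|A_{12}\right\|\le \left\|\mathcal{R}T\right\|$ followed by \eqref{eq_real} --- but it reaches that waypoint by a different and more elementary middle step. The paper writes $\left\|A_{12}\right\|=\omega\left(\left[\begin{smallmatrix} O & A_{12}\\ A_{12}^* & O\end{smallmatrix}\right]\right)$, invokes the pinching inequality of Lemma \ref{lem_w_diag} to dominate this by $\omega\left(\mathcal{R}T\right)$, and then uses the self-adjointness of $\mathcal{R}T$ to identify $\omega\left(\mathcal{R}T\right)$ with $\left\|\mathcal{R}T\right\|$. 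You instead bound $\left\|A_{12}\right\|$ by $\left\|\mathcal{R}T\right\|$ directly, testing $\mathcal{R}T$ against the unit vectors $\left[\begin{smallmatrix}0\\ x\end{smallmatrix}\right]$ and $\left[\begin{smallmatrix}y\\ 0\end{smallmatrix}\right]$; your block computation is right ($\mathcal{R}T$ does have $(1,2)$-entry $A_{12}=\tfrac{1}{2}(T_{12}+T_{21}^{*})$, consistent with \eqref{05}, and $\left\langle A_{12}x,y\right\rangle$ equals the stated inner product), so the supremum argument goes through. This avoids Lemma \ref{lem_w_diag} entirely and needs nothing beyond the variational formula $\left\|S\right\|=\sup_{\|u\|=\|v\|=1}\left|\left\langle Su,v\right\rangle\right|$. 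What the paper's route buys is consistency with the toolkit used throughout the section (the same pinching lemma reappears elsewhere); what yours buys is self-containedness, and the observation that the submatrix-norm bound holds for any block of any operator matrix, self-adjoint or not.
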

\begin{proof}
Noting that $\left[\begin{array}{cc}A_{11}&A_{12}\\A_{12}^*&A_{22}\end{array}\right]$ is self-adjoint (and hence is normal), we have $\omega\left( \left[\begin{array}{cc}A_{11}&A_{12}\\A_{12}^*&A_{22}\end{array}\right] \right)=\left\| \left[\begin{array}{cc}A_{11}&A_{12}\\A_{12}^*&A_{22}\end{array}\right] \right\|$. Then
\[\begin{aligned}
   \left\| {{A}_{12}} \right\| &=\omega \left( \left[ \begin{matrix}
   O & {{A}_{12}}  \\
   A_{12}^{*} & O  \\
\end{matrix} \right] \right) \\ 
 & \le \omega \left( \left[ \begin{matrix}
   {{A}_{11}} & {{A}_{12}}  \\
   A_{12}^{*} & {{A}_{22}}  \\
\end{matrix} \right] \right) \quad \text{(by\;Lemma\;\ref{lem_w_diag})} \\
&=\left\|\left[ \begin{matrix}
   {{A}_{11}} & {{A}_{12}}  \\
   A_{12}^{*} & {{A}_{22}}  \\
\end{matrix} \right]\right\|\\
&\leq \omega(T)\quad({\text{by}}\;\eqref{eq_real}).
\end{aligned}\]
 This completes the proof.
\end{proof}
In the proof of the above result, we have used the fact that $\|\mathcal{R}T\|\leq \omega(T)$

\begin{theorem}\label{06}
Let $T\in \mathcal{B}(\mathcal{H}\oplus\mathcal{H})$ be partitioned as in \eqref{05}. Then
\[\frac{1}{4}\left\| {{\left| {{T}_{12}} \right|}^{2}}+{{\left| T_{21}^{*} \right|}^{2}} \right\|\le {{\omega }^{2}}\left( T \right).\]
\end{theorem}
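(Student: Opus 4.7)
The plan is to exploit the Cartesian decomposition \eqref{05} to rewrite the two off-diagonal entries in a convenient form, and then to combine Lemma \ref{04} with an elementary cancellation identity. Since $A_{21}=A_{12}^{*}$ and $B_{21}=B_{12}^{*}$, reading off the $(1,2)$ and $(2,1)$ entries of \eqref{05} immediately gives $T_{12}=A_{12}+\textup{i}B_{12}$ and $T_{21}=A_{12}^{*}+\textup{i}B_{12}^{*}$, whence $T_{21}^{*}=A_{12}-\textup{i}B_{12}$.

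The heart of the argument is the observation that when the squared moduli of these two operators are expanded and added, the cross terms involving $A_{12}^{*}B_{12}$ and $B_{12}^{*}A_{12}$ enter with opposite signs and cancel. Writing $|T_{12}|^{2}=T_{12}^{*}T_{12}$ and $|T_{21}^{*}|^{2}=T_{21}T_{21}^{*}$, a direct expansion of the products yields the clean identity
\[
|T_{12}|^{2}+|T_{21}^{*}|^{2}=2\,|A_{12}|^{2}+2\,|B_{12}|^{2}.
\]

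It then remains to bound the right-hand side in norm. Lemma \ref{04} gives $\|A_{12}\|\le\omega(T)$, and the same argument applied to the imaginary part---replacing $\mathcal{R}T$ by $\mathcal{I}T$ and using the companion bound $\|\mathcal{I}T\|\le\omega(T)$ from \eqref{eq_real}---produces $\|B_{12}\|\le\omega(T)$. Combining the triangle inequality with the identity $\||X|^{2}\|=\|X\|^{2}$, I conclude
\[
\bigl\||T_{12}|^{2}+|T_{21}^{*}|^{2}\bigr\|\le 2\|A_{12}\|^{2}+2\|B_{12}\|^{2}\le 4\,\omega^{2}(T),
\]
and dividing by $4$ delivers the theorem. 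I do not expect a genuine obstacle here: the bookkeeping of the Cartesian decomposition and the cancellation of cross terms are the only nontrivial steps, and both are immediate once Lemma \ref{04} (together with its imaginary-part twin) is in hand.
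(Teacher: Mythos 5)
Your argument is correct and follows essentially the same route as the paper: identify $T_{12}=A_{12}+\textup{i}B_{12}$ and $T_{21}^{*}=A_{12}-\textup{i}B_{12}$, use the cancellation identity $|T_{12}|^{2}+|T_{21}^{*}|^{2}=2|A_{12}|^{2}+2|B_{12}|^{2}$, and then apply Lemma \ref{04} (and its imaginary-part analogue via \eqref{eq_real}) together with the triangle inequality and the $C^{*}$-identity $\||X|^{2}\|=\|X\|^{2}$. No gaps.
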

\begin{proof}
It follows from Lemma \ref{04} that
\begin{equation}\label{11}
\left\| {{A}_{12}} \right\|\le \omega \left( \mathcal RT \right)\;\text{ and }\;\left\| {{B}_{12}} \right\|\le \omega \left( \mathcal IT \right).
\end{equation}

We then obtain
\[\begin{aligned}
   \left\| {{\left| {{T}_{12}} \right|}^{2}}+{{\left| T_{21}^{*} \right|}^{2}} \right\|&=\left\| {{\left| {{A}_{12}}+\textup i{{B}_{12}} \right|}^{2}}+{{\left| {{A}_{12}}-\textup i{{B}_{12}} \right|}^{2}} \right\| \\ 
 & =2\left\| {{\left| {{A}_{12}} \right|}^{2}}+{{\left| {{B}_{12}} \right|}^{2}} \right\| \\ 
 & \le 2\left( {{\left\| {{A}_{12}} \right\|}^{2}}+{{\left\| {{B}_{12}} \right\|}^{2}} \right) \quad \text{(by the triangle inequality)}\\ 
 & \le 2\left( {{\omega }^{2}}\left( \mathcal RT \right)+{{\omega }^{2}}\left( \mathcal IT \right) \right) \quad \text{(by \eqref{11})}\\ 
 & \le 4{{\omega }^{2}}\left( T \right) \quad \text{(by \eqref{eq_real})},
\end{aligned}\]
and the proof is complete.
\end{proof}

In the rest of this section, we treat accretive-dissipative operator matrices. In \cite{03}, it has been shown that if $A=\left[ \begin{matrix}
   {{A}_{11}} & {{A}_{12}}  \\
   A_{12}^{*} & {{A}_{22}}  \\
\end{matrix} \right]\geq O$, then
\begin{equation}\label{8}
2\left\| {{A}_{12}} \right\|\le \left\| \left[ \begin{matrix}
   {{A}_{11}} & {{A}_{12}}  \\
   A_{12}^{*} & {{A}_{22}}  \\
\end{matrix} \right] \right\|.
\end{equation}
The following result presents a possible numerical radius version for accretive-dissipative operators. As we mentioned in the introduction, it is natural that accretive-dissipative operators satisfy better bounds than arbitrary ones. We leave the proof of Theorem \ref{08} to the reader. Of course, this result must be compared with Theorem \ref{06}.
\begin{theorem}\label{08}
Let $T\in  \mathcal{B}(\mathcal{H}\oplus\mathcal{H})$ be accretive-dissipative partitioned as in \eqref{05}. Then
\[\left\| {{\left| {{T}_{12}} \right|}^{2}}+{{\left| T_{21}^{*} \right|}^{2}} \right\|\le {{\omega }^{2}}\left( T \right).\]
\end{theorem}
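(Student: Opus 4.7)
The plan is to mimic the proof of Theorem \ref{06} line-by-line, but to replace the bound supplied by Lemma \ref{04} with the strictly sharper bound \eqref{8} that is available when the operator matrix is positive. The point of the theorem is precisely that accretive-dissipativity forces both Cartesian parts to be positive operator matrices, and each such positivity improves the coefficient by a factor of two; the two factors of two combine to remove the factor of $4$ that appears in Theorem \ref{06}.

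More concretely, since $T$ is accretive-dissipative, the Cartesian decomposition \eqref{05} gives
\[
\mathcal{R}T = \begin{bmatrix} A_{11} & A_{12} \\ A_{12}^* & A_{22} \end{bmatrix} \ge O, \qquad \mathcal{I}T = \begin{bmatrix} B_{11} & B_{12} \\ B_{12}^* & B_{22} \end{bmatrix} \ge O.
\]
To each of these I would apply \eqref{8} to get $2\|A_{12}\| \le \|\mathcal{R}T\|$ and $2\|B_{12}\| \le \|\mathcal{I}T\|$. Combining with \eqref{eq_real} then yields the improved pair of bounds $\|A_{12}\| \le \omega(T)/2$ and $\|B_{12}\| \le \omega(T)/2$, which are exactly the halves of \eqref{11}.

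Finally, I would reuse verbatim the algebraic identity from the proof of Theorem \ref{06}, namely $T_{12} = A_{12} + \textup{i}B_{12}$ and $T_{21}^* = A_{12} - \textup{i}B_{12}$, whence the parallelogram-type computation gives $|T_{12}|^2 + |T_{21}^*|^2 = 2(|A_{12}|^2 + |B_{12}|^2)$. Taking norms and applying the triangle inequality followed by the improved bounds above yields
\[
\| |T_{12}|^2 + |T_{21}^*|^2 \| \le 2(\|A_{12}\|^2 + \|B_{12}\|^2) \le 2 \left( \frac{\omega^2(T)}{4} + \frac{\omega^2(T)}{4} \right) = \omega^2(T),
\]
which is the claim. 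There is no real obstacle here: the only ingredient beyond the proof of Theorem \ref{06} is invoking \eqref{8} for each Cartesian part, and that invocation is legitimate exactly because accretive-dissipativity guarantees the required positivity of $\mathcal{R}T$ and $\mathcal{I}T$.
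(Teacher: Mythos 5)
Your proof is correct, and it is exactly the argument the authors intend: the paper leaves the proof of Theorem \ref{08} to the reader, having set up \eqref{8} immediately beforehand and pointed to Theorem \ref{06} for comparison, so the intended route is precisely to replace Lemma \ref{04} by \eqref{8} applied to the positive Cartesian parts and rerun the identity $\left|T_{12}\right|^{2}+\left|T_{21}^{*}\right|^{2}=2\left(\left|A_{12}\right|^{2}+\left|B_{12}\right|^{2}\right)$. Your bounds $\left\|A_{12}\right\|\le\omega(T)/2$ and $\left\|B_{12}\right\|\le\omega(T)/2$ and the final computation check out, and the stated example $\left[\begin{smallmatrix}1+\textup{i}&1-\textup{i}\\1-\textup{i}&1+\textup{i}\end{smallmatrix}\right]$ confirms sharpness.
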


It should be noted that the inequalities in Theorems \ref{06}, and \ref{08} are sharp. To show the sharpness of these theorems, consider the $2\times 2$ matrices  $\left[ \begin{matrix}
   0 & 1  \\
   0 & 0  \\
\end{matrix} \right]$, and $\left[ \begin{matrix}
   1+\textup i & 1-\textup i  \\
   1-\textup i & 1+\textup i  \\
\end{matrix} \right]$, respectively (for each theorem).

For a positive  block matrix $\left[ \begin{matrix}
   {{A}_{11}} & {{A}_{12}}  \\
   A_{12}^{*} & {{A}_{22}}  \\
\end{matrix} \right]$, it is well known that \cite{bl}
\begin{equation}\label{09}
\left\| \left[ \begin{matrix}
   {{A}_{11}} & {{A}_{12}}  \\
   A_{12}^{*} & {{A}_{22}}  \\
\end{matrix} \right] \right\|\le \left\| {{A}_{11}} \right\|+\left\| {{A}_{22}} \right\|.
\end{equation}
Employing this inequality, we can obtain the following result.

We conclude this section with the following  upper bounds for $\omega \left( {{T}_{12}} \right)$.
\begin{theorem}
Let $T\in  \mathcal{B}(\mathcal{H}\oplus\mathcal{H})$ be accretive-dissipative partitioned as in \eqref{05}. Then
\begin{equation}\label{6}
\omega \left( {{T}_{12}} \right)\le \frac{1}{2}\left\| {{A}_{11}}+{{A}_{22}}+{{B}_{11}}+{{B}_{22}} \right\|,
\end{equation}
and
\begin{equation}\label{7}
\omega \left( {{T}_{12}} \right)\le \sqrt{\left\| {{A}_{11}}+{{B}_{11}} \right\|\left\| {{A}_{22}}+{{B}_{22}} \right\|}.
\end{equation}
Further, both inequalities are sharp.
\end{theorem}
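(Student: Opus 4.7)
The plan is to exploit the fact that both $\mathcal{R}T$ and $\mathcal{I}T$ are positive block matrices and then invoke the Cauchy--Schwarz-type characterization recorded in Lemma \ref{011}. Concretely, since $\mathcal{R}T\ge O$ with off-diagonal $A_{12}$ and diagonal blocks $A_{11},A_{22}$, Lemma \ref{011} yields, for every unit vector $x\in\mathcal H$,
\[
|\langle A_{12}x,x\rangle|^{2}\le \langle A_{11}x,x\rangle\,\langle A_{22}x,x\rangle,
\]
and the analogous inequality with $A$'s replaced by $B$'s comes from $\mathcal{I}T\ge O$. These pointwise estimates are the engine of the whole argument.

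Next, I would use the Cartesian decomposition $T_{12}=A_{12}+\textup i B_{12}$ to write
\[
|\langle T_{12}x,x\rangle|\le |\langle A_{12}x,x\rangle|+|\langle B_{12}x,x\rangle|\le \sqrt{\langle A_{11}x,x\rangle\langle A_{22}x,x\rangle}+\sqrt{\langle B_{11}x,x\rangle\langle B_{22}x,x\rangle}.
\]
For \eqref{6}, I would dominate each square root by the AM--GM inequality $\sqrt{ab}\le \tfrac{a+b}{2}$ to obtain
\[
|\langle T_{12}x,x\rangle|\le \tfrac12\big\langle (A_{11}+A_{22}+B_{11}+B_{22})x,x\big\rangle,
\]
and then take the supremum over unit vectors $x$; because $A_{11}+A_{22}+B_{11}+B_{22}$ is self-adjoint (in fact positive, being the sum of the diagonal entries of positive block matrices), this supremum equals its norm.

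For \eqref{7}, the final step is different: after arriving at the same intermediate estimate, I would apply the discrete Cauchy--Schwarz inequality $\sqrt{a_1b_1}+\sqrt{a_2b_2}\le \sqrt{(a_1+a_2)(b_1+b_2)}$ with $a_1=\langle A_{11}x,x\rangle$, $b_1=\langle A_{22}x,x\rangle$, $a_2=\langle B_{11}x,x\rangle$, $b_2=\langle B_{22}x,x\rangle$, giving
\[
|\langle T_{12}x,x\rangle|\le \sqrt{\big\langle (A_{11}+B_{11})x,x\big\rangle\big\langle (A_{22}+B_{22})x,x\big\rangle},
\]
and then pass to the supremum. No step is really a hard obstacle; the only thing to be a bit careful about is the passage from the inner-product bound to the norm bound, which uses self-adjointness (here positivity) of the diagonal blocks $A_{ii}+B_{ii}$. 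Finally, to verify sharpness I would test the inequalities on the $2\times2$ matrix $\left[\begin{smallmatrix}1+\textup i & 1-\textup i\\ 1-\textup i & 1+\textup i\end{smallmatrix}\right]$ already used for Theorem \ref{08}: a direct computation with Lemma \ref{01} on $T_{12}$ and comparison with the right-hand sides should produce equality in both \eqref{6} and \eqref{7}.
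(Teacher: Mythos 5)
Your proof of \eqref{6} coincides with the paper's: both rest on Lemma \ref{011} applied to $\mathcal RT\ge O$ and $\mathcal IT\ge O$, followed by the triangle inequality, AM--GM, and a supremum over unit vectors. For \eqref{7}, however, you take a genuinely different route. The paper keeps the bound \eqref{6}, applies it to the scaled operator $\left[\begin{smallmatrix}tT_{11}&T_{12}\\ T_{21}&T_{22}/t\end{smallmatrix}\right]$ (which is still accretive-dissipative by Lemma \ref{5}), and then minimizes $\frac12\left(t\|A_{11}+B_{11}\|+\frac1t\|A_{22}+B_{22}\|\right)$ over $t>0$. You instead apply the discrete Cauchy--Schwarz inequality $\sqrt{a_1b_1}+\sqrt{a_2b_2}\le\sqrt{(a_1+a_2)(b_1+b_2)}$ directly to the intermediate pointwise estimate, obtaining
\[
|\langle T_{12}x,x\rangle|\le\sqrt{\langle (A_{11}+B_{11})x,x\rangle\,\langle (A_{22}+B_{22})x,x\rangle}\le\sqrt{\|A_{11}+B_{11}\|\,\|A_{22}+B_{22}\|},
\]
which is correct and arguably cleaner: it avoids Lemma \ref{5} and the optimization over $t$ entirely, and it even yields a vector-wise inequality that is formally stronger than the final norm bound. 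Both methods land on the same constant, so this is a legitimate alternative proof of \eqref{7}.

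There is, however, a concrete error in your sharpness verification. For $T=\left[\begin{smallmatrix}1+\textup i&1-\textup i\\ 1-\textup i&1+\textup i\end{smallmatrix}\right]$ one computes $\mathcal RT=\left[\begin{smallmatrix}1&1\\1&1\end{smallmatrix}\right]$ and $\mathcal IT=\left[\begin{smallmatrix}1&-1\\-1&1\end{smallmatrix}\right]$, so $A_{11}=A_{22}=B_{11}=B_{22}=1$ and $T_{12}=1-\textup i$; hence $\omega(T_{12})=\sqrt2$, while both right-hand sides of \eqref{6} and \eqref{7} equal $2$. Equality fails, so this matrix (which does witness sharpness in Theorem \ref{08}) does not establish sharpness here. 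The paper's choice $T=\left[\begin{smallmatrix}1&1\\1&1\end{smallmatrix}\right]$ does the job: it is accretive-dissipative with $\mathcal IT=O$, $\omega(T_{12})=1$, and both bounds also equal $1$. Replace your test matrix accordingly and the argument is complete.
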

\begin{proof}
Since $T$ is an accretive-dissipative matrix, then, by Lemma \ref{011}, we have 
\[\left| \left\langle {{A}_{12}}x,x \right\rangle  \right|\le \sqrt{\left\langle {{A}_{11}}x,x \right\rangle \left\langle {{A}_{22}}x,x \right\rangle }\;\text{ and }\;\left| \left\langle {{B}_{12}}x,x \right\rangle  \right|\le \sqrt{\left\langle {{B}_{11}}x,x \right\rangle \left\langle {{B}_{22}}x,x \right\rangle }\]
for any unit vector $x\in \mathcal H$. Therefore,
\begin{equation}\label{010}
\begin{aligned}
   \left| \left\langle {{T}_{12}}x,x \right\rangle  \right|&=\left| \left\langle \left( {{A}_{12}}+\textup i{{B}_{12}} \right)x,x \right\rangle  \right|\\
   &=\left| \left\langle {{A}_{12}}x,x \right\rangle +\textup i\left\langle {{B}_{12}}x,x \right\rangle  \right| \\ 
 & \le \left| \left\langle {{A}_{12}}x,x \right\rangle  \right|+\left| \left\langle {{B}_{12}}x,x \right\rangle  \right| \\ 
 & \le \sqrt{\left\langle {{A}_{11}}x,x \right\rangle \left\langle {{A}_{22}}x,x \right\rangle }+\sqrt{\left\langle {{B}_{11}}x,x \right\rangle \left\langle {{B}_{22}}x,x \right\rangle }. 
\end{aligned}
\end{equation}
On the other hand, by the arithmetic-geometric mean inequality, we know that
\[\sqrt{\left\langle {{A}_{11}}x,x \right\rangle \left\langle {A}_{22}x,x \right\rangle }+\sqrt{\left\langle {{B}_{11}}x,x \right\rangle \left\langle {{B}_{22}}x,x \right\rangle }\le \frac{1}{2}\left\langle \left( {{A}_{11}}+{{A}_{22}}+{{B}_{11}}+{{B}_{22}} \right)x,x \right\rangle.\]
Combining this with the inequality \eqref{010}, gives the inequality \eqref{6}.

For the inequality \eqref{7}, sine $T$ is accretive-dissipative, it follows from Lemma \ref{5} that $\left[ \begin{matrix}
   t{{T}_{11}} & {{T}_{12}}  \\
   {{T}_{21}} & \frac{{{T}_{22}}}{t}  \\
\end{matrix} \right]$ is accretive-dissipative  for all $t>0$. Now applying the inequality \eqref{6} to $S=\left[ \begin{matrix}
   t{{A}_{11}} & {{A}_{12}}  \\
   A_{12}^{*} & \frac{{{A}_{22}}}{t}  \\
\end{matrix} \right]+\textup i\left[ \begin{matrix}
   t{{B}_{11}} & {{B}_{12}}  \\
   B_{12}^{*} & \frac{{{B}_{22}}}{t}  \\
\end{matrix} \right]$, we have
\[\begin{aligned}
   \omega \left( {{T}_{12}} \right)&\le \frac{1}{2}\left\| t{{A}_{11}}+\frac{{{A}_{22}}}{t}+t{{B}_{11}}+\frac{{{B}_{22}}}{t} \right\| \\ 
 & \le \frac{1}{2}\left( t\left\| {{A}_{11}}+{{B}_{11}} \right\|+\frac{1}{t}\left\| {{A}_{22}}+{{B}_{22}} \right\| \right)  
\end{aligned}\]
 for all $t>0$. Now taking the minimum over all $t>0$, we have
\[\omega \left( {{T}_{12}} \right)\le \sqrt{\left\| {{A}_{11}}+{{B}_{11}} \right\|\left\| {{A}_{22}}+{{B}_{22}} \right\|},\]
as required. Sharpness of the inequalities can be seen using the matrix $T=\left[\begin{array}{cc}1&1\\1&1\end{array}\right].$
\end{proof}

On the other hand, a lower bound for $\omega(T)$ in terms of its real and imaginary parts can be stated as follows.

\begin{theorem}
Let $T\in \mathcal{B}(\mathcal{H}\oplus\mathcal{H})$ be  partitioned as in \eqref{05}. Then
\[\max \left( \alpha ,\beta  \right)\le \omega\left(T\right),\]
where
\[\alpha =\max \left( \left\| {{A}_{11}} \right\|,\left\| {{A}_{22}} \right\|,\frac{\left\| {{A}_{12}}+A_{12}^{*} \right\|}{2},\frac{\left\| {{A}_{12}}-A_{12}^{*} \right\|}{2} \right),\]
and
\[\beta =\max \left( \left\| {{B}_{11}} \right\|,\left\| {{B}_{22}} \right\|,\frac{\left\| {{B}_{12}}+B_{12}^{*} \right\|}{2},\frac{\left\| {{B}_{12}}-B_{12}^{*} \right\|}{2} \right).\]
Further, the inequality is sharp.
\end{theorem}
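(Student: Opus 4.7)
The plan is to reduce the statement to the first inequality of Lemma \ref{lem_shebr} applied separately to the self-adjoint operator matrices $\mathcal{R}T$ and $\mathcal{I}T$, and then to invoke \eqref{eq_real}.

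First I would observe that since $A_{12}=A_{21}^{*}$ and $B_{12}=B_{21}^{*}$ (and the diagonal entries $A_{ii}, B_{ii}$ are self-adjoint), the matrices
\[
\mathcal{R}T=\left[\begin{matrix} A_{11} & A_{12} \\ A_{12}^{*} & A_{22} \end{matrix}\right] \quad\text{and}\quad \mathcal{I}T=\left[\begin{matrix} B_{11} & B_{12} \\ B_{12}^{*} & B_{22} \end{matrix}\right]
\]
are both self-adjoint, and hence normal. Consequently $\omega(\mathcal{R}T)=\|\mathcal{R}T\|$ and $\omega(\mathcal{I}T)=\|\mathcal{I}T\|$. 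Next, I would note that $A_{11}$ and $A_{22}$ are self-adjoint, so $\omega(A_{ii})=\|A_{ii}\|$, that $A_{12}+A_{12}^{*}$ is self-adjoint, and that $A_{12}-A_{12}^{*}$ is skew-adjoint (hence normal); therefore the numerical radii of $A_{12}\pm A_{12}^{*}$ coincide with their operator norms. The analogous remarks apply to the $B$-block.

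Then I would apply the lower bound in Lemma \ref{lem_shebr} to the self-adjoint matrix $\mathcal{R}T$, obtaining
\[
\|\mathcal{R}T\|=\omega(\mathcal{R}T)\geq \max\!\left(\|A_{11}\|,\|A_{22}\|,\frac{\|A_{12}+A_{12}^{*}\|}{2},\frac{\|A_{12}-A_{12}^{*}\|}{2}\right)=\alpha,
\]
and similarly $\|\mathcal{I}T\|\geq \beta$. Combining with $\|\mathcal{R}T\|\leq \omega(T)$ and $\|\mathcal{I}T\|\leq \omega(T)$ from \eqref{eq_real} yields $\alpha\leq \omega(T)$ and $\beta\leq \omega(T)$, hence $\max(\alpha,\beta)\leq \omega(T)$.

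For the sharpness, I would take a very simple test matrix, for instance $T=\bigl[\begin{smallmatrix}1 & 0\\ 0 & 0\end{smallmatrix}\bigr]$ (viewed over $\mathcal{H}=\mathbb{C}$), which is self-adjoint so $B_{ij}=0$; then $\omega(T)=1$, $\alpha=1$, and $\beta=0$, giving equality. I do not foresee a serious obstacle in this proof; the only thing to be careful about is that the skew-adjoint piece $A_{12}-A_{12}^{*}$ is normal (not self-adjoint), which is why it still satisfies the equality of numerical radius and norm required to pass from Lemma \ref{lem_shebr} (which is phrased with numerical radii on the right-hand side) to the statement of the theorem (which involves operator norms).
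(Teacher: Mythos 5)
Your proof is correct and matches the paper's argument essentially step for step: both apply the lower bound of Lemma \ref{lem_shebr} to the self-adjoint blocks $\mathcal{R}T$ and $\mathcal{I}T$, identify numerical radii with operator norms via normality, combine with \eqref{eq_real}, and use the same matrix $\left[\begin{smallmatrix}1&0\\0&0\end{smallmatrix}\right]$ for sharpness. Your explicit remark that $A_{12}-A_{12}^{*}$ is skew-adjoint (hence normal) is a careful touch the paper leaves implicit, but the route is the same.
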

\begin{proof}
We know that $\left\| \mathcal RT \right\|,\left\| \mathcal IT \right\|\le \omega(T)$. In consequence,
\begin{equation}\label{3}
\left\| \left[ \begin{matrix}
   {{A}_{11}} & {{A}_{12}}  \\
   A_{12}^{*} & {{A}_{22}}  \\
\end{matrix} \right] \right\|,\left\| \left[ \begin{matrix}
   {{B}_{11}} & {{B}_{12}}  \\
   B_{12}^{*} & {{B}_{22}}  \\
\end{matrix} \right] \right\|\le \omega(T).
\end{equation}
On the other hand, by Lemma \ref{lem_shebr}, we have
\[\begin{aligned}
  & \max \left( \left\| {{A}_{11}} \right\|,\left\| {{A}_{22}} \right\|,\frac{\left\| {{A}_{12}}+A_{12}^{*} \right\|}{2},\frac{\left\| {{A}_{12}}-A_{12}^{*} \right\|}{2} \right) \\ 
 & =\max \left( \omega \left( {{A}_{11}} \right),\omega \left( {{A}_{22}} \right),\frac{\omega \left( {{A}_{12}}+A_{12}^{*} \right)}{2},\frac{\omega \left( {{A}_{12}}-A_{12}^{*} \right)}{2} \right) \\ 
 & \le \omega \left( \left[ \begin{matrix}
   {{A}_{11}} & {{A}_{12}}  \\
   A_{12}^{*} & {{A}_{22}}  \\
\end{matrix} \right] \right) \\ 
 & =\left\| \left[ \begin{matrix}
   {{A}_{11}} & {{A}_{12}}  \\
   A_{12}^{*} & {{A}_{22}}  \\
\end{matrix} \right] \right\|,  
\end{aligned}\]
i.e.,
\begin{equation}\label{4}
\max \left( \left\| {{A}_{11}} \right\|,\left\| {{A}_{22}} \right\|,\frac{\left\| {{A}_{12}}+A_{12}^{*} \right\|}{2},\frac{\left\| {{A}_{12}}-A_{12}^{*} \right\|}{2} \right)\le \left\| \left[ \begin{matrix}
   {{A}_{11}} & {{A}_{12}}  \\
   A_{12}^{*} & {{A}_{22}}  \\
\end{matrix} \right] \right\|.
\end{equation}
Combining the inequalities \eqref{3} and \eqref{4}, gives
\begin{equation}\label{9}
\max \left( \left\| {{A}_{11}} \right\|,\left\| {{A}_{22}} \right\|,\frac{\left\| {{A}_{12}}+A_{12}^{*} \right\|}{2},\frac{\left\| {{A}_{12}}-A_{12}^{*} \right\|}{2} \right)\le \omega(T).
\end{equation}
In a similar way, 
\begin{equation}\label{10}
\max \left( \left\| {{B}_{11}} \right\|,\left\| {{B}_{22}} \right\|,\frac{\left\| {{B}_{12}}+B_{12}^{*} \right\|}{2},\frac{\left\| {{B}_{12}}-B_{12}^{*} \right\|}{2} \right)\le \omega(T).
\end{equation}
The desired inequality follows by \eqref{9} and \eqref{10}. Sharpness of the obtained inequality can be verified using the matrix $T=\left[\begin{array}{cc}1&0\\0&0\end{array}\right]$. This completes the proof.
\end{proof}

\begin{corollary}
Let $T\in \mathcal{B}(\mathcal{H}\oplus\mathcal{H})$ be accretive-dissipative partitioned as in \eqref{05}. Then
\[2\max \left( \left\| {{A}_{12}} \right\|,\left\| {{B}_{12}} \right\| \right)\le \omega(T).\]
\end{corollary}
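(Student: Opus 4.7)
The plan is to reduce the corollary directly to inequality \eqref{8} (applied to the real and imaginary parts of $T$) together with the standard estimate $\|\mathcal{R}T\|,\|\mathcal{I}T\|\le\omega(T)$ from \eqref{eq_real}. The key observation is that since $T$ is accretive-dissipative, both $\mathcal RT=[A_{jk}]$ and $\mathcal IT=[B_{jk}]$ are positive operator matrices in $\mathcal B(\mathcal H\oplus\mathcal H)$, and their off-diagonal entries are exactly $A_{12}$ and $B_{12}$ respectively (with adjoints $A_{12}^*,B_{12}^*$).

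First I would record the positivity:
$$\left[\begin{matrix} A_{11}&A_{12}\\ A_{12}^*&A_{22}\end{matrix}\right]=\mathcal RT\ge O\qquad\text{and}\qquad \left[\begin{matrix} B_{11}&B_{12}\\ B_{12}^*&B_{22}\end{matrix}\right]=\mathcal IT\ge O.$$
Next, I would apply inequality \eqref{8} to each of these positive block matrices to obtain
$$2\|A_{12}\|\le\|\mathcal RT\|\qquad\text{and}\qquad 2\|B_{12}\|\le\|\mathcal IT\|.$$
Finally, invoking \eqref{eq_real} gives $\|\mathcal RT\|\le\omega(T)$ and $\|\mathcal IT\|\le\omega(T)$, so each of the quantities $2\|A_{12}\|$ and $2\|B_{12}\|$ is bounded above by $\omega(T)$, and taking the maximum yields the claim.

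There is essentially no obstacle here; the only thing worth flagging is the identification of $\mathcal RT$ and $\mathcal IT$ with the block matrices $[A_{jk}]$ and $[B_{jk}]$, which is guaranteed by the conditions $A_{12}=A_{21}^*$ and $B_{12}=B_{21}^*$ built into the decomposition \eqref{05}. After that, the proof is a two-line chain combining \eqref{8} and \eqref{eq_real}, so the corollary could even be stated as an immediate consequence of those two ingredients rather than of the preceding theorem.
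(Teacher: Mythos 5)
Your proposal is correct and follows exactly the paper's argument: the paper likewise applies inequality \eqref{8} to the positive blocks $\mathcal RT$ and $\mathcal IT$ and then bounds their norms by $\omega(T)$ via \eqref{eq_real} (recorded there as \eqref{3}). No differences worth noting.
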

\begin{proof}
By the inequalities \eqref{3} and \eqref{8}, we infer that
\[2\left\| {{A}_{12}} \right\|\le \left\| \left[ \begin{matrix}
   {{A}_{11}} & {{A}_{12}}  \\
   A_{12}^{*} & {{A}_{22}}  \\
\end{matrix} \right] \right\|\le \omega\left( \left[ \begin{matrix}
   {{T}_{11}} & {{T}_{12}}  \\
   {{T}_{21}} & {{T}_{22}}  \\
\end{matrix} \right] \right)\]
i.e.,
\[2\left\| {{A}_{12}} \right\|\le \omega(T).\]
In a similar way we can prove
\[2\left\| {{B}_{12}} \right\|\le \omega(T).\]
These inequalities prove the assertion.
\end{proof}

\section{Bounds for the operator norm}
In the previous section, we presented upper bounds for the numerical radius of operator matrices and their components. This section proves some upper bounds for general $2\times 2$ operator matrices.

We should remark here that if $T:=\left[\begin{array}{cc}A&B\\B^*&C\end{array}\right]$ is positive, then $\|T\|\leq \|A\|+\|C\|$. On the other hand, if $T\geq O$ and $B$ is self-adjoint, then $\|T\|\leq \|A+C\|$; see \cite{hiro}.\\
In what follows, we present new bounds for general $2\times 2$ operator matrices.

In this result, the notation $r(\cdot)$ refers to the spectral radius. We recall here that when $T$ is self-adjoint then $r(T)=\omega(T)=\|T\|$. We also recall that \cite[Lemma 2.1]{5}
\begin{equation}\label{eq_r}
r\left(\left[\begin{array}{cc} A&B\\ C&D\end{array}\right]\right)\leq r\left(\left[\begin{array}{cc}\|A\|&\|B\| \\ \|C\| &\|D\| \end{array}\right]\right).
\end{equation}

\begin{theorem} 
Let $T\in\mathcal{B}(\mathcal{H}\oplus\mathcal{H})$ be partitioned as in \eqref{05}. Then
\[\begin{aligned}
   \left\| \left[ \begin{matrix}
   {{T}_{11}} & {{T}_{12}}  \\
   {{T}_{21}} & {{T}_{22}}  \\
\end{matrix} \right] \right\|&\le \frac{1}{2}\left( \left\| {{A}_{11}} \right\|+\left\| {{A}_{22}} \right\|+\sqrt{{{\left( \left\| {{A}_{11}} \right\|-\left\| {{A}_{22}} \right\| \right)}^{2}}+4{{\left\| {{A}_{12}} \right\|}^{2}}} \right) \\ 
 &\quad +\frac{1}{2}\left( \left\| {{B}_{11}} \right\|+\left\| {{B}_{22}} \right\|+\sqrt{{{\left( \left\| {{B}_{11}} \right\|-\left\| {{B}_{22}} \right\| \right)}^{2}}+4{{\left\| {{B}_{12}} \right\|}^{2}}} \right).  
\end{aligned}\]
\end{theorem}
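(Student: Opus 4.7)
The plan is to exploit the Cartesian decomposition of $T$ together with the triangle inequality, and then handle each self-adjoint piece via the spectral-radius bound \eqref{eq_r}. Specifically, writing $T=\mathcal{R}T+\textup i\,\mathcal{I}T$ and using the triangle inequality yields
\[
\left\|\left[\begin{matrix}T_{11}&T_{12}\\T_{21}&T_{22}\end{matrix}\right]\right\|\le\left\|\left[\begin{matrix}A_{11}&A_{12}\\A_{12}^{*}&A_{22}\end{matrix}\right]\right\|+\left\|\left[\begin{matrix}B_{11}&B_{12}\\B_{12}^{*}&B_{22}\end{matrix}\right]\right\|,
\]
so the problem reduces to estimating the norm of a self-adjoint operator matrix of the form $S=\left[\begin{smallmatrix}X&Y\\Y^{*}&Z\end{smallmatrix}\right]$ with $X,Z$ self-adjoint.

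Next, since $S$ is self-adjoint we have $\|S\|=r(S)$, so inequality \eqref{eq_r} gives
\[
\|S\|=r(S)\le r\!\left(\left[\begin{matrix}\|X\|&\|Y\|\\\|Y^{*}\|&\|Z\|\end{matrix}\right]\right)=r\!\left(\left[\begin{matrix}\|X\|&\|Y\|\\\|Y\|&\|Z\|\end{matrix}\right]\right),
\]
using $\|Y^{*}\|=\|Y\|$. The resulting $2\times2$ scalar matrix is symmetric with nonnegative entries, and its spectral radius is simply its largest eigenvalue, which by an elementary characteristic-polynomial computation equals
\[
\frac{\|X\|+\|Z\|+\sqrt{(\|X\|-\|Z\|)^{2}+4\|Y\|^{2}}}{2}.
\]

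Applying this estimate to $\mathcal{R}T$ (with $X=A_{11}$, $Y=A_{12}$, $Z=A_{22}$) and to $\mathcal{I}T$ (with $X=B_{11}$, $Y=B_{12}$, $Z=B_{22}$) and summing yields the advertised bound. The main (mild) obstacle is really just recognizing that the matrix of norms becomes symmetric because $\|A_{12}^{*}\|=\|A_{12}\|$ and $\|B_{12}^{*}\|=\|B_{12}\|$; everything else is a straightforward assembly of the Cartesian decomposition, the triangle inequality, the identity $\|\cdot\|=r(\cdot)$ for self-adjoint operators, and the scalar spectral-radius bound \eqref{eq_r}.
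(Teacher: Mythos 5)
Your proposal is correct and follows essentially the same route as the paper: triangle inequality on the Cartesian decomposition, the identity $\|\cdot\|=r(\cdot)$ for the self-adjoint blocks, the spectral-radius bound \eqref{eq_r}, and then the explicit eigenvalue of the resulting symmetric nonnegative $2\times 2$ scalar matrix (which the paper extracts via Lemma \ref{01} rather than the characteristic polynomial, an immaterial difference).
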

\begin{proof}
Using the triangle inequality, then noting that $\left[ \begin{matrix}
   {{A}_{11}} & {{A}_{12}}  \\
   A_{12}^{*} & {{A}_{22}}  \\
\end{matrix} \right]$ and $\left[ \begin{matrix}
   {{B}_{11}} & {{B}_{12}}  \\
   B_{12}^{*} & {{B}_{22}}  \\
\end{matrix} \right]$ are self-adjoint, we have 
\[\begin{aligned}
  & \left\| \left[ \begin{matrix}
   {{T}_{11}} & {{T}_{12}}  \\
   {{T}_{21}} & {{T}_{22}}  \\
\end{matrix} \right] \right\| \\ 
 & =\left\| \left[ \begin{matrix}
   {{A}_{11}} & {{A}_{12}}  \\
   A_{12}^{*} & {{A}_{22}}  \\
\end{matrix} \right]+\textup i\left[ \begin{matrix}
   {{B}_{11}} & {{B}_{12}}  \\
   B_{12}^{*} & {{B}_{22}}  \\
\end{matrix} \right] \right\| \\ 
 & \le \left\| \left[ \begin{matrix}
   {{A}_{11}} & {{A}_{12}}  \\
   A_{12}^{*} & {{A}_{22}}  \\
\end{matrix} \right] \right\|+\left\| \left[ \begin{matrix}
   {{B}_{11}} & {{B}_{12}}  \\
   B_{12}^{*} & {{B}_{22}}  \\
\end{matrix} \right] \right\| \\ 
 & =r\left( \left[ \begin{matrix}
   {{A}_{11}} & {{A}_{12}}  \\
   A_{12}^{*} & {{A}_{22}}  \\
\end{matrix} \right] \right)+r\left( \left[ \begin{matrix}
   {{B}_{11}} & {{B}_{12}}  \\
   B_{12}^{*} & {{B}_{22}}  \\
\end{matrix} \right] \right) \\ 
 & \le r\left( \left[ \begin{matrix}
   \left\| {{A}_{11}} \right\| & \left\| {{A}_{12}} \right\|  \\
   \left\| A_{12}^{*} \right\| & \left\| {{A}_{22}} \right\|  \\
\end{matrix} \right] \right)+r\left( \left[ \begin{matrix}
   \left\| {{B}_{11}} \right\| & \left\| {{B}_{12}} \right\|  \\
   \left\| B_{12}^{*} \right\| & \left\| {{B}_{22}} \right\|  \\
\end{matrix} \right] \right) \quad({\text{by}}\;\eqref{eq_r})\\ 
 & =\frac{1}{2}\left( \left\| {{A}_{11}} \right\|+\left\| {{A}_{22}} \right\|+\sqrt{{{\left( \left\| {{A}_{11}} \right\|-\left\| {{A}_{22}} \right\| \right)}^{2}}+4{{\left\| {{A}_{12}} \right\|}^{2}}} \right) \\ 
 &\quad +\frac{1}{2}\left( \left\| {{B}_{11}} \right\|+\left\| {{B}_{22}} \right\|+\sqrt{{{\left( \left\| {{B}_{11}} \right\|-\left\| {{B}_{22}} \right\| \right)}^{2}}+4{{\left\| {{B}_{12}} \right\|}^{2}}} \right), 
\end{aligned}\]
where we have used Lemma \ref{01} to obtain the last equation. This completes the proof.
\end{proof}

In the following results, we list some weighted upper bounds for $ \left\| \left[ \begin{matrix}
   {{T}_{1}} & {{T}_{2}}  \\
   {{T}_{3}} & {{T}_{4}}  \\
\end{matrix} \right] \right\|$.
\begin{theorem}\label{1}
Let ${{T}_{i}}\in \mathcal B\left( \mathcal H \right)$ for $i=1,2,3,4$. Then for any $0\le t\le 1$,
\[\begin{aligned}
   \left\| \left[ \begin{matrix}
   {{T}_{1}} & {{T}_{2}}  \\
   {{T}_{3}} & {{T}_{4}}  \\
\end{matrix} \right] \right\|&\le \frac{1}{2}\max \left( \left\| {{\left| {{T}_{1}} \right|}^{2t}}+{{\left| {{T}_{3}} \right|}^{2t}} \right\|,\left\| {{\left| {{T}_{4}} \right|}^{2t}}+{{\left| {{T}_{2}} \right|}^{2t}} \right\| \right) \\ 
 &\quad +\frac{1}{2}\max \left( \left\| {{\left| T_{1}^{*} \right|}^{2\left( 1-t \right)}}+{{\left| T_{3}^{*} \right|}^{2\left( 1-t \right)}} \right\|,\left\| {{\left| T_{4}^{*} \right|}^{2\left( 1-t \right)}}+{{\left| T_{2}^{*} \right|}^{2\left( 1-t \right)}} \right\| \right). 
\end{aligned}\]
\end{theorem}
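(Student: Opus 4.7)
The plan is to bound $|\langle T\xi,\eta\rangle|$ for unit vectors $\xi=\left[\begin{matrix}x_1\\x_2\end{matrix}\right]$ and $\eta=\left[\begin{matrix}y_1\\y_2\end{matrix}\right]$ in $\mathcal{H}\oplus\mathcal{H}$ and then pass to the supremum to recover $\|T\|$. The principal tool is the mixed (Kato) Cauchy--Schwarz inequality: for any $A\in\mathcal{B}(\mathcal{H})$ and any $0\le t\le 1$,
\[
|\langle Ax,y\rangle|\le \langle |A|^{2t}x,x\rangle^{1/2}\,\langle |A^{*}|^{2(1-t)}y,y\rangle^{1/2}.
\]

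The first step is to expand the inner product using the block structure:
\[
\langle T\xi,\eta\rangle=\sum_{i,j=1}^{2}\langle T_{ij}x_{j},y_{i}\rangle,
\]
with the identification $T_{11}=T_{1}$, $T_{12}=T_{2}$, $T_{21}=T_{3}$, $T_{22}=T_{4}$. I would then apply the triangle inequality together with Kato's inequality to each of the four summands, producing four products of the form $\langle|T_{ij}|^{2t}x_{j},x_{j}\rangle^{1/2}\langle|T_{ij}^{*}|^{2(1-t)}y_{i},y_{i}\rangle^{1/2}$.

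The second step is to apply the arithmetic--geometric mean inequality $\sqrt{ab}\le(a+b)/2$ to each such product, so that the overall bound splits cleanly into an ``$x$-side'' and a ``$y$-side''. Reorganizing the $x$-side by collecting the two terms that share a common $x_{j}$ produces, for $j=1$, the positive operator $|T_{1}|^{2t}+|T_{3}|^{2t}$ and, for $j=2$, the positive operator $|T_{2}|^{2t}+|T_{4}|^{2t}$. Bounding each $\langle Sx_{j},x_{j}\rangle$ by $\|S\|\,\|x_{j}\|^{2}$ for positive $S$ and using $\|x_{1}\|^{2}+\|x_{2}\|^{2}=\|\xi\|^{2}=1$ gives the first maximum in the statement (with the factor $1/2$). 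Repeating the analogous grouping on the $y$-side, this time collecting the terms sharing each $y_{i}$, yields the second maximum, after which taking the supremum over unit vectors finishes the argument.

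The conceptual content is entirely contained in Kato's inequality combined with AM--GM; the only real obstacle is a bit of index bookkeeping, namely tracking how the four pairs $(i,j)$ regroup into the two column-type sums $|T_{1}|^{2t}+|T_{3}|^{2t}$, $|T_{2}|^{2t}+|T_{4}|^{2t}$ and the two analogous sums for the adjoint powers, so that the two maxima in the stated bound emerge correctly. No further analytic input (no positivity or normality hypothesis on $T$) is needed.
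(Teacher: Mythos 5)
Your overall strategy is essentially the paper's: bound $|\langle T\xi,\eta\rangle|$ by Kato's mixed Schwarz inequality and then apply AM--GM. The only cosmetic difference is that the paper applies Kato twice, to the block-diagonal part $\left[\begin{smallmatrix}T_1&O\\O&T_4\end{smallmatrix}\right]$ and the block-off-diagonal part $\left[\begin{smallmatrix}O&T_2\\T_3&O\end{smallmatrix}\right]$ viewed as operators on $\mathcal H\oplus\mathcal H$, whereas you apply it four times, entrywise; after the AM--GM step the two computations land on the same intermediate expression, so this difference is immaterial.

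The issue is your final bookkeeping claim. Collecting the $y$-side terms by the shared vector $y_i$ pairs the \emph{rows} of the adjoints: the $y_1$-terms give $|T_1^*|^{2(1-t)}+|T_2^*|^{2(1-t)}$ and the $y_2$-terms give $|T_3^*|^{2(1-t)}+|T_4^*|^{2(1-t)}$. The second maximum in the statement instead pairs $\{T_1^*,T_3^*\}$ and $\{T_2^*,T_4^*\}$. These do not coincide, so what your argument actually proves is
\[
\begin{aligned}
\left\|\left[\begin{matrix}T_1&T_2\\T_3&T_4\end{matrix}\right]\right\|\le{}&\tfrac12\max\left(\left\|\,|T_1|^{2t}+|T_3|^{2t}\right\|,\left\|\,|T_4|^{2t}+|T_2|^{2t}\right\|\right)\\
&+\tfrac12\max\left(\left\|\,|T_1^*|^{2(1-t)}+|T_2^*|^{2(1-t)}\right\|,\left\|\,|T_3^*|^{2(1-t)}+|T_4^*|^{2(1-t)}\right\|\right),
\end{aligned}
\]
not the displayed inequality. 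This is not something you can fix by regrouping, because the inequality as printed is in fact false: take $T_1=T_2=1$, $T_3=T_4=0$ (scalars) and $t=1/2$; then the left side is $\sqrt2$ while the printed right side equals $1$. The source of the discrepancy is a slip in the paper's own proof: for $C=\left[\begin{smallmatrix}O&T_2\\T_3&O\end{smallmatrix}\right]$ one has $CC^*=\operatorname{diag}(T_2T_2^*,\,T_3T_3^*)$, hence $|C^*|=\operatorname{diag}(|T_2^*|,|T_3^*|)$, not $\operatorname{diag}(|T_3^*|,|T_2^*|)$ as written there. So your version (which also matches the grouping appearing in Theorem \ref{thm_2}) is the correct statement; just do not assert that the stated second maximum ``emerges correctly'' from your grouping, because it does not.
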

\begin{proof}
Let $\mathbf x=\left[ \begin{matrix}
   {{x}_{1}}  \\
   {{x}_{2}}  \\
\end{matrix} \right]$ and $\mathbf y=\left[ \begin{matrix}
   {{y}_{1}}  \\
   {{y}_{2}}  \\
\end{matrix} \right]$ be any two unit vectors in $\mathcal H\oplus \mathcal H$. Then
\begin{align}
  & \left| \left\langle \left[ \begin{matrix}
   {{T}_{1}} & {{T}_{2}}  \\
   {{T}_{3}} & {{T}_{4}}  \\
\end{matrix} \right]\mathbf x,\mathbf y \right\rangle  \right| \nonumber\\ 
 & =\left| \left\langle \left[ \begin{matrix}
   {{T}_{1}} & O  \\
   O & {{T}_{4}}  \\
\end{matrix} \right]\mathbf x,\mathbf y \right\rangle +\left\langle \left[ \begin{matrix}
   O & {{T}_{2}}  \\
   {{T}_{3}} & O  \\
\end{matrix} \right]\mathbf x,\mathbf y \right\rangle  \right| \nonumber\\ 
 & \le \left| \left\langle \left[ \begin{matrix}
   {{T}_{1}} & O  \\
   O & {{T}_{4}}  \\
\end{matrix} \right]\mathbf x,\mathbf y \right\rangle  \right|+\left| \left\langle \left[ \begin{matrix}
   O & {{T}_{2}}  \\
   {{T}_{3}} & O  \\
\end{matrix} \right]\mathbf x,\mathbf y \right\rangle  \right|\quad \text{(by the triangle inequality)} \nonumber\\ 
 & \le \sqrt{\left\langle {{\left| \left[ \begin{matrix}
   {{T}_{1}} & O  \\
   O & {{T}_{4}}  \\
\end{matrix} \right] \right|}^{2t}}\mathbf x,\mathbf x \right\rangle \left\langle {{\left| {{\left[ \begin{matrix}
   {{T}_{1}} & O  \\
   O & {{T}_{4}}  \\
\end{matrix} \right]}^{*}} \right|}^{2\left( 1-t \right)}}\mathbf y,\mathbf y \right\rangle }\nonumber \\ 
 &\quad +\sqrt{\left\langle {{\left| \left[ \begin{matrix}
   O & {{T}_{2}}  \\
   {{T}_{3}} & O  \\
\end{matrix} \right] \right|}^{2t}}\mathbf x,\mathbf x \right\rangle \left\langle {{\left| {{\left[ \begin{matrix}
   O & {{T}_{2}}  \\
   {{T}_{3}} & O  \\
\end{matrix} \right]}^{*}} \right|}^{2\left( 1-t \right)}}\mathbf y,\mathbf y \right\rangle } \quad\text{(by the mixed-Schwarz inequality \cite{kato})}\nonumber\\ 
 & =\sqrt{\left\langle \left[ \begin{matrix}
   {{\left| {{T}_{1}} \right|}^{2t}} & O  \\
   O & {{\left| {{T}_{4}} \right|}^{2t}}  \\
\end{matrix} \right]\mathbf x,\mathbf x \right\rangle \left\langle \left[ \begin{matrix}
   {{\left| T_{1}^{*} \right|}^{2\left( 1-t \right)}} & O  \\
   O & {{\left| T_{4}^{*} \right|}^{2\left( 1-t \right)}}  \\
\end{matrix} \right]\mathbf y,\mathbf y \right\rangle }\nonumber \\ 
 &\quad +\sqrt{\left\langle \left[ \begin{matrix}
   {{\left| {{T}_{3}} \right|}^{2t}} & O  \\
   O & {{\left| {{T}_{2}} \right|}^{2t}}  \\
\end{matrix} \right]\mathbf x,\mathbf x \right\rangle \left\langle \left[ \begin{matrix}
   {{\left| T_{3}^{*} \right|}^{2\left( 1-t \right)}} & O  \\
   O & {{\left| T_{2}^{*} \right|}^{2\left( 1-t \right)}}  \\
\end{matrix} \right]\mathbf y,\mathbf y \right\rangle } \label{xxx}\\ 
 & \le \frac{1}{2}\left\langle \left[ \begin{matrix}
   {{\left| {{T}_{1}} \right|}^{2t}}+{{\left| {{T}_{3}} \right|}^{2t}} & O  \\
   O & {{\left| {{T}_{4}} \right|}^{2t}}+{{\left| {{T}_{2}} \right|}^{2t}}  \\
\end{matrix} \right]\mathbf x,\mathbf x \right\rangle \nonumber \\ 
 & \quad+\frac{1}{2}\left\langle \left[ \begin{matrix}
   {{\left| T_{1}^{*} \right|}^{2\left( 1-t \right)}}+{{\left| T_{3}^{*} \right|}^{2\left( 1-t \right)}} & O  \\
   O & {{\left| T_{4}^{*} \right|}^{2\left( 1-t \right)}}+{{\left| T_{2}^{*} \right|}^{2\left( 1-t \right)}}  \\
\end{matrix} \right]\mathbf y,\mathbf y \right\rangle\nonumber\\
&\qquad\text{(by the arithmetic-geometric mean inequality)} \nonumber \\
 & \le \frac{1}{2}\left\| \left[ \begin{matrix}
   {{\left| {{T}_{1}} \right|}^{2t}}+{{\left| {{T}_{3}} \right|}^{2t}} & O  \\
   O & {{\left| {{T}_{4}} \right|}^{2t}}+{{\left| {{T}_{2}} \right|}^{2t}}  \nonumber\\
\end{matrix} \right] \right\|\nonumber \\ 
 &\quad +\frac{1}{2}\left\| \left[ \begin{matrix}
   {{\left| T_{1}^{*} \right|}^{2\left( 1-t \right)}}+{{\left| T_{3}^{*} \right|}^{2\left( 1-t \right)}} & O  \\
   O & {{\left| T_{4}^{*} \right|}^{2\left( 1-t \right)}}+{{\left| T_{2}^{*} \right|}^{2\left( 1-t \right)}}  \\
\end{matrix} \right] \right\| \nonumber\\ 
 & =\frac{1}{2}\max \left( \left\| {{\left| {{T}_{1}} \right|}^{2t}}+{{\left| {{T}_{3}} \right|}^{2t}} \right\|,\left\| {{\left| {{T}_{4}} \right|}^{2t}}+{{\left| {{T}_{2}} \right|}^{2t}} \right\| \right) \nonumber\\ 
 &\quad +\frac{1}{2}\max \left( \left\| {{\left| T_{1}^{*} \right|}^{2\left( 1-t \right)}}+{{\left| T_{3}^{*} \right|}^{2\left( 1-t \right)}} \right\|,\left\| {{\left| T_{4}^{*} \right|}^{2\left( 1-t \right)}}+{{\left| T_{2}^{*} \right|}^{2\left( 1-t \right)}} \right\| \right),  \nonumber
\end{align}
which yields that
\[\begin{aligned}
   \left\| \left[ \begin{matrix}
   {{T}_{1}} & {{T}_{2}}  \\
   {{T}_{3}} & {{T}_{4}}  \\
\end{matrix} \right] \right\|&\le \frac{1}{2}\max \left( \left\| {{\left| {{T}_{1}} \right|}^{2t}}+{{\left| {{T}_{3}} \right|}^{2t}} \right\|,\left\| {{\left| {{T}_{4}} \right|}^{2t}}+{{\left| {{T}_{2}} \right|}^{2t}} \right\| \right) \\ 
 &\quad +\frac{1}{2}\max \left( \left\| {{\left| T_{1}^{*} \right|}^{2\left( 1-t \right)}}+{{\left| T_{3}^{*} \right|}^{2\left( 1-t \right)}} \right\|,\left\| {{\left| T_{4}^{*} \right|}^{2\left( 1-t \right)}}+{{\left| T_{2}^{*} \right|}^{2\left( 1-t \right)}} \right\| \right), 
\end{aligned}\]
as required.
\end{proof}

In fact, a more general form of Theorem \ref{1} can be stated in terms of functions $f,g:[0,\infty)$ satisfying $f(x)g(x)=x$. In this case, we get
\begin{align*}
\left\|\left[\begin{array}{cc} T_1&T_2\\T_3&T_4\end{array}\right]\right\|\leq &\frac{1}{2}\max\left(\|f(|T_1|^2)+f(|T_3|^2)\|,\|f(|T_4|^2)+f(|T_2|^2)\|\right)\\
&+\frac{1}{2}\max\left(\|g(|T_1^*|^2)+g(|T_3^*|^2)\|,\|g(|T_4^*|^2)+g(|T_2^*|^2)\|\right).
\end{align*}
In Theorem \ref{thm_2} below, we present our result in this form for the reader's convenience.
\begin{remark}
 From Theorem \ref{1} it is immediate that
\[\begin{aligned}
   \left\| \left[ \begin{matrix}
   {{T}_{1}} & {{T}_{2}}  \\
   T_{2}^{*} & {{T}_{3}}  \\
\end{matrix} \right] \right\|&\le \frac{1}{2}\max \left( \left\| {{\left| {{T}_{1}} \right|}^{2t}}+{{\left| {{T}_{2}}^* \right|}^{2t}} \right\|,\left\| {{\left| T_{2} \right|}^{2t}}+{{\left| {{T}_{3}} \right|}^{2t}} \right\| \right) \\ 
 &\quad +\frac{1}{2}\max \left( \left\| {{\left| T_{1}^{*} \right|}^{2\left( 1-t \right)}}+{{\left| T_{2} \right|}^{2\left( 1-t \right)}} \right\|,\left\| {{\left| {{T}_{2}}^* \right|}^{2\left( 1-t \right)}}+{{\left| T_{3}^{*} \right|}^{2\left( 1-t \right)}} \right\| \right). 
\end{aligned}\]
\end{remark}

\begin{remark}
It follows from Theorem \ref{1} that
	\[\begin{aligned}
   \max \left( \left\| {{T}_{1}}+{{T}_{2}} \right\|,\left\| {{T}_{1}}-{{T}_{2}} \right\| \right)&=\left\| \left[ \begin{matrix}
   {{T}_{1}} & {{T}_{2}}  \\
   {{T}_{2}} & {{T}_{1}}  \\
\end{matrix} \right] \right\| \\ 
 & \le \frac{1}{2}\left( \left\| {{\left| {{T}_{1}} \right|}^{2t}}+{{\left| {{T}_{2}} \right|}^{2t}} \right\|+\left\| {{\left| T_{1}^{*} \right|}^{2\left( 1-t \right)}}+{{\left| T_{2}^{*} \right|}^{2\left( 1-t \right)}} \right\| \right).  
\end{aligned}\]
In particular (see \cite[Theorem 6]{had})
	\[\max \left( \left\| {{T}_{1}}+{{T}_{2}} \right\|,\left\| {{T}_{1}}-{{T}_{2}} \right\| \right)\le \frac{1}{2}\left( \left\| \left| {{T}_{1}} \right|+\left| {{T}_{2}} \right| \right\|+\left\| \left| T_{1}^{*} \right|+\left| T_{2}^{*} \right| \right\| \right).\]
Notice that the inequality 
	\[\left\| {{T}_{1}}+{{T}_{2}} \right\|\le \left\| \left| {{T}_{1}} \right|+\left| {{T}_{2}} \right| \right\|\]
is not true for all ${{T}_{1}},{{T}_{2}}$; see  \cite[(1.42)]{1}. The above inequality holds when ${{T}_{1}},{{T}_{2}}$ are normal operator.

\end{remark}

Another upper bound for $\left\| \left[ \begin{matrix}
   {{T}_{1}} & {{T}_{2}}  \\
   T_{3} & {{T}_{4}}  \\
\end{matrix} \right] \right\|$ can be found as follows.
\begin{theorem}\label{thm_2}
Let ${{T}_{i}}\in \mathcal B\left( \mathcal H \right)$ for $i=1,2,3,4$. If $f, g$ are nonnegative continuous functions on $\left[ 0,\infty  \right)$ satisfying $f(x)g(x) =x$, $(x \ge0)$, then
\[\begin{aligned}
   \left\| \left[ \begin{matrix}
   {{T}_{1}} & {{T}_{2}}  \\
   {{T}_{3}} & {{T}_{4}}  \\
\end{matrix} \right] \right\|&\le \frac{1}{2}\left( \left\| {{f}^{2}}\left( \left| {{T}_{1}} \right| \right)+{{f}^{2}}\left( \left| {{T}_{3}} \right| \right) \right\|+\left\| {{g}^{2}}\left( \left| T_{1}^{*} \right| \right)+{{g}^{2}}\left( \left| T_{2}^{*} \right| \right) \right\| \right. \\ 
 &\qquad \left. +\left\| {{f}^{2}}\left( \left| {{T}_{2}} \right| \right)+{{f}^{2}}\left( \left| {{T}_{4}} \right| \right) \right\|+\left\| {{g}^{2}}\left( \left| T_{3}^{*} \right| \right)+{{g}^{2}}\left( \left| T_{4}^{*} \right| \right) \right\| \right). 
\end{aligned}\]
In particular, for any $0\le t\le 1$,
\[\begin{aligned}
   \left\| \left[ \begin{matrix}
   {{T}_{1}} & {{T}_{2}}  \\
   {{T}_{3}} & {{T}_{4}}  \\
\end{matrix} \right] \right\|&\le \frac{1}{2}\left( \left\| {{\left| {{T}_{1}} \right|}^{2t}}+{{\left| {{T}_{3}} \right|}^{2t}} \right\|+\left\| {{\left| T_{1}^{*} \right|}^{2\left( 1-t \right)}}+{{\left| T_{2}^{*} \right|}^{2\left( 1-t \right)}} \right\| \right. \\ 
 &\qquad \left. +\left\| {{\left| {{T}_{2}} \right|}^{2t}}+{{\left| {{T}_{4}} \right|}^{2t}} \right\|+\left\| {{\left| T_{3}^{*} \right|}^{2\left( 1-t \right)}}+{{\left| T_{4}^{*} \right|}^{2\left( 1-t \right)}} \right\| \right).
\end{aligned}\]
\end{theorem}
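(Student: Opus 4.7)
The plan is to mirror the inner-product-expansion technique of Theorem \ref{1} while decomposing $\langle T\mathbf x, \mathbf y\rangle$ into its four entrywise scalar pieces rather than into the block-diagonal and block-anti-diagonal pieces used there; this alternative grouping is what produces the columnwise $f$-terms and rowwise $g$-terms appearing in the statement. For unit vectors $\mathbf x = \left[ \begin{matrix} x_1 \\ x_2 \end{matrix} \right]$ and $\mathbf y = \left[ \begin{matrix} y_1 \\ y_2 \end{matrix} \right]$ in $\mathcal H \oplus \mathcal H$, I would begin with the trivial expansion
$$\left\langle \left[ \begin{matrix} T_1 & T_2 \\ T_3 & T_4 \end{matrix} \right] \mathbf x, \mathbf y \right\rangle = \langle T_1 x_1, y_1 \rangle + \langle T_2 x_2, y_1 \rangle + \langle T_3 x_1, y_2 \rangle + \langle T_4 x_2, y_2 \rangle.$$

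To each of the four scalar pieces I would apply the generalized mixed-Schwarz inequality
$|\langle Ax, y \rangle|^2 \le \langle f^2(|A|)x, x \rangle \, \langle g^2(|A^*|)y, y \rangle$,
which holds whenever $f(x)g(x) = x$ on $[0,\infty)$ (this is the natural extension of the Kato inequality cited in Theorem \ref{1}, arising from the polar-decomposition factorization $A = g(|A^*|)\,U\,f(|A|)$; it was already invoked implicitly in the remark following Theorem \ref{1}). Following each resulting square root by the arithmetic-geometric mean inequality $\sqrt{ab} \le \tfrac12(a+b)$ produces a sum of eight inner-product terms. The essential step is then to collect these eight terms according to the vector on which they act: the two $f^2$-contributions hitting $x_1$ arise from the first column of $T$, namely $T_1$ and $T_3$, and combine into $\langle (f^2(|T_1|) + f^2(|T_3|))x_1, x_1 \rangle$; the two hitting $x_2$ come from the second column $T_2, T_4$; and symmetrically the $g^2$-contributions hitting $y_1$ come from the first row $T_1, T_2$, while those hitting $y_2$ come from the second row $T_3, T_4$.

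After bounding each of the four resulting quadratic forms by its operator norm and using $\|x_j\|, \|y_k\| \le 1$, taking the supremum over unit vectors $\mathbf x, \mathbf y$ yields the first claimed inequality. The \emph{in particular} assertion is then immediate on specialising to $f(s) = s^t$, $g(s) = s^{1-t}$ with $t \in [0,1]$. The only genuine subtlety is the index bookkeeping in the regrouping step: one must verify that the eight terms produced by mixed-Schwarz combined with the arithmetic-geometric mean inequality assemble precisely into the four column- and row-indexed norm expressions in the statement, rather than mismatching rows and columns. No analytic input beyond generalized Kato and the arithmetic-geometric mean inequality is required.
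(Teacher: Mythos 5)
Your proposal is correct and follows essentially the same route as the paper's own proof: expand $\langle T\mathbf x,\mathbf y\rangle$ into the four entrywise terms, apply the generalized mixed-Schwarz inequality (the paper cites \cite[Theorem 1]{04}) to each, use the arithmetic--geometric mean inequality, and regroup the $f^{2}$-terms by column and the $g^{2}$-terms by row before passing to norms and taking the supremum. The index bookkeeping you flag as the only subtlety does work out exactly as you describe, so there is nothing to add.
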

\begin{proof}
Let $\mathbf x=\left[ \begin{matrix}
   {{x}_{1}}  \\
   {{x}_{2}}  \\
\end{matrix} \right]$ and $\mathbf y=\left[ \begin{matrix}
   {{y}_{1}}  \\
   {{y}_{2}}  \\
\end{matrix} \right]$ be any two unit vectors in $\mathcal H\oplus \mathcal{H}$. Then
\[\begin{aligned}
  & \left| \left\langle \left[ \begin{matrix}
   {{T}_{1}} & {{T}_{2}}  \\
   {{T}_{3}} & {{T}_{4}}  \\
\end{matrix} \right]\mathbf x,\mathbf y \right\rangle  \right| \\ 
 & =\left| \left\langle \left[ \begin{matrix}
   {{T}_{1}} & {{T}_{2}}  \\
   {{T}_{3}} & {{T}_{4}}  \\
\end{matrix} \right]\left[ \begin{matrix}
   {{x}_{1}}  \\
   {{x}_{2}}  \\
\end{matrix} \right],\left[ \begin{matrix}
   {{y}_{1}}  \\
   {{y}_{2}}  \\
\end{matrix} \right] \right\rangle  \right| \\ 
 & =\left| \left\langle {{T}_{1}}{{x}_{1}},{{y}_{1}} \right\rangle +\left\langle {{T}_{2}}{{x}_{2}},{{y}_{1}} \right\rangle +\left\langle {{T}_{3}}{{x}_{1}},{{y}_{2}} \right\rangle +\left\langle {{T}_{4}}{{x}_{2}},{{y}_{2}} \right\rangle  \right| \\ 
 & \le \left| \left\langle {{T}_{1}}{{x}_{1}},{{y}_{1}} \right\rangle  \right|+\left| \left\langle {{T}_{2}}{{x}_{2}},{{y}_{1}} \right\rangle  \right|+\left| \left\langle {{T}_{3}}{{x}_{1}},{{y}_{2}} \right\rangle  \right|+\left| \left\langle {{T}_{4}}{{x}_{2}},{{y}_{2}} \right\rangle  \right| \\ 
  &\qquad \text{(by the triangle inequality)}\\
 & \le \sqrt{\left\langle {{f}^{2}}\left( \left| {{T}_{1}} \right| \right){{x}_{1}},{{x}_{1}} \right\rangle \left\langle {{g}^{2}}\left( \left| T_{1}^{*} \right| \right){{y}_{1}},{{y}_{1}} \right\rangle }+\sqrt{\left\langle {{f}^{2}}\left( \left| {{T}_{2}} \right| \right){{x}_{2}},{{x}_{2}} \right\rangle \left\langle {{g}^{2}}\left( \left| T_{2}^{*} \right| \right){{y}_{1}},{{y}_{1}} \right\rangle } \\ 
 &\quad +\sqrt{\left\langle {{f}^{2}}\left( \left| {{T}_{3}} \right| \right){{x}_{1}},{{x}_{1}} \right\rangle \left\langle {{g}^{2}}\left( \left| T_{3}^{*} \right| \right){{y}_{2}},{{y}_{2}} \right\rangle }+\sqrt{\left\langle {{f}^{2}}\left( \left| {{T}_{4}} \right| \right){{x}_{2}},{{x}_{2}} \right\rangle \left\langle {{g}^{2}}\left( \left| T_{4}^{*} \right| \right){{y}_{2}},{{y}_{2}} \right\rangle } \\
   &\qquad \text{(by \cite[Theorem 1]{04})}\\  
 & \le \frac{1}{2}\left( \left\langle \left( {{f}^{2}}\left( \left| {{T}_{1}} \right| \right)+{{f}^{2}}\left( \left| {{T}_{3}} \right| \right) \right){{x}_{1}},{{x}_{1}} \right\rangle +\left\langle \left( {{g}^{2}}\left( \left| T_{1}^{*} \right| \right)+{{g}^{2}}\left( \left| T_{2}^{*} \right| \right) \right){{y}_{1}},{{y}_{1}} \right\rangle  \right. \\ 
 &\qquad \left. +\left\langle \left( {{f}^{2}}\left( \left| {{T}_{2}} \right| \right)+{{f}^{2}}\left( \left| {{T}_{4}} \right| \right) \right){{x}_{2}},{{x}_{2}} \right\rangle +\left\langle \left( {{g}^{2}}\left( \left| T_{3}^{*} \right| \right)+{{g}^{2}}\left( \left| T_{4}^{*} \right| \right) \right){{y}_{2}},{{y}_{2}} \right\rangle  \right) \\ 
  &\qquad \text{(by the arithmetic-geometric mean inequality)}.  
\end{aligned}\]
We therefore have
\[\begin{aligned}
   \left\| \left[ \begin{matrix}
   {{T}_{1}} & {{T}_{2}}  \\
   {{T}_{3}} & {{T}_{4}}  \\
\end{matrix} \right] \right\|&\le \frac{1}{2}\left( \left\| {{f}^{2}}\left( \left| {{T}_{1}} \right| \right)+{{f}^{2}}\left( \left| {{T}_{3}} \right| \right) \right\|+\left\| {{g}^{2}}\left( \left| T_{1}^{*} \right| \right)+{{g}^{2}}\left( \left| T_{2}^{*} \right| \right) \right\| \right. \\ 
 &\qquad \left. +\left\| {{f}^{2}}\left( \left| {{T}_{2}} \right| \right)+{{f}^{2}}\left( \left| {{T}_{4}} \right| \right) \right\|+\left\| {{g}^{2}}\left( \left| T_{3}^{*} \right| \right)+{{g}^{2}}\left( \left| T_{4}^{*} \right| \right) \right\| \right). 
\end{aligned}\]
This proves the first desired inequality. The second inequality follows by letting $f(x)=x^{t}$ and $g(x)=x^{1-t}, 0\leq t\leq 1$ in the first inequality.
\end{proof}

We also have the following bound for accretive-dissipative operator matrices.
\begin{theorem}
Let $T\in  \mathcal{B}(\mathcal{H}\oplus\mathcal{H})$ be accretive-dissipative partitioned as in \eqref{05}. Then
\[\|T\|\le \sqrt{{{\left( \left\| {{A}_{11}} \right\|+\left\| {{A}_{22}} \right\| \right)}^{2}}+{{\left( \left\| {{B}_{11}} \right\|+\left\| {{B}_{22}} \right\| \right)}^{2}}}.\]
\end{theorem}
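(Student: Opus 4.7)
The plan is to split the task into two independent estimates: a ``squared'' bound relating $\|T\|$ to the norms of $\mathcal RT$ and $\mathcal IT$, and the positive block-matrix bound \eqref{09} applied to these two pieces separately. Concretely, I would aim to show
\[
\|T\|^{2}\le \|\mathcal RT\|^{2}+\|\mathcal IT\|^{2},
\]
and then chain this with \eqref{09}, since $\mathcal RT=[A_{ij}]$ and $\mathcal IT=[B_{ij}]$ are both positive $2\times 2$ block operators (the accretive-dissipative hypothesis forces both Cartesian parts to be positive), so \eqref{09} gives
\[
\|\mathcal RT\|\le \|A_{11}\|+\|A_{22}\|,\qquad \|\mathcal IT\|\le \|B_{11}\|+\|B_{22}\|.
\]
Squaring and adding produces exactly the right-hand side of the claimed inequality. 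Thus the whole theorem reduces to the squared norm estimate.

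For the squared estimate, I would start from the Cartesian decomposition $T=A+iB$ with $A=\mathcal RT\ge O$ and $B=\mathcal IT\ge O$, and use the identity
\[
T^{*}T+TT^{*}=2(A^{2}+B^{2}),
\]
together with the observation that, because the numerical range of $T$ lies in the closed first quadrant, for every unit vector $x\in\mathcal H\oplus\mathcal H$
\[
|\langle Tx,x\rangle|^{2}=\langle Ax,x\rangle^{2}+\langle Bx,x\rangle^{2}\le \|A\|^{2}+\|B\|^{2}.
\]
This immediately yields the numerical-radius analogue $\omega(T)^{2}\le \|A\|^{2}+\|B\|^{2}$. To upgrade this to the operator norm I would exploit the positivity of both $T^{*}T$ and $TT^{*}$ jointly, writing $T^{*}T=A^{2}+B^{2}+i[A,B]$, $TT^{*}=A^{2}+B^{2}-i[A,B]$, and using the fact that the self-adjoint ``commutator piece'' $i[A,B]$ is dominated, in the operator order, by $(\|A\|^{2}I-A^{2})+(\|B\|^{2}I-B^{2})$ when $A,B\ge O$. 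If one can establish that operator inequality, then adding $A^{2}+B^{2}$ to both sides gives $T^{*}T\le(\|A\|^{2}+\|B\|^{2})I$, which is exactly $\|T\|^{2}\le \|A\|^{2}+\|B\|^{2}$.

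The main obstacle is precisely this operator inequality $i[A,B]\le(\|A\|^{2}I-A^{2})+(\|B\|^{2}I-B^{2})$, since the automatic bounds from $T^{*}T\ge O$ and $TT^{*}\ge O$ only yield $\pm i[A,B]\le A^{2}+B^{2}$, whereas we need $\pm i[A,B]$ to be controlled by the complementary ``gap'' operators $\|A\|^{2}I-A^{2}$ and $\|B\|^{2}I-B^{2}$. I would attempt this by working with the translated positive operators $\widetilde A=\|A\|I-A\ge O$ and $\widetilde B=\|B\|I-B\ge O$, noting $[A,B]=[\widetilde A,\widetilde B]$, and expressing the desired inequality as $\widetilde T^{*}\widetilde T\le 2\|A\|\widetilde A+2\|B\|\widetilde B$ where $\widetilde T=\widetilde A+i\widetilde B$, which is again accretive-dissipative and amenable to a Cauchy--Schwarz/Schur complement style argument using Lemma~\ref{011}. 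Once this piece is in hand, the proof closes by combining with \eqref{09} as indicated in the first paragraph.
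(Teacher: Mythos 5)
Your overall strategy coincides with the paper's: the theorem is obtained by chaining the squared estimate $\|T\|^{2}\le\|\mathcal RT\|^{2}+\|\mathcal IT\|^{2}$ with \eqref{09} applied separately to the two positive Cartesian parts, and your second half is identical to the paper's. The only difference is that the paper simply cites Mirman \cite{mir} for the squared estimate, whereas you set out to prove it; the step you flag as the ``main obstacle'' does close along the lines you indicate. Indeed, with $\widetilde A=\|A\|I-A\ge O$ and $\widetilde B=\|B\|I-B\ge O$ one has $[\widetilde A,\widetilde B]=[A,B]$, and positivity of $\widetilde T\widetilde T^{*}$ and $\widetilde T^{*}\widetilde T$ for $\widetilde T=\widetilde A+\textup i\widetilde B$ gives $\pm \textup i[A,B]\le \widetilde A^{2}+\widetilde B^{2}$; since the commuting positive factors satisfy $\widetilde A^{2}=\widetilde A\left(\|A\|I-A\right)\le \widetilde A\left(\|A\|I+A\right)=\|A\|^{2}I-A^{2}$ (because $A\ge O$), and similarly for $B$, this is even stronger than the operator inequality you need. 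Adding $A^{2}+B^{2}$ then yields $T^{*}T\le\left(\|A\|^{2}+\|B\|^{2}\right)I$, i.e.\ the Mirman inequality, after which your argument and the paper's are the same. (Your intermediate numerical-range remark only gives the weaker $\omega(T)^{2}\le\|A\|^{2}+\|B\|^{2}$ and is not needed.)
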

\begin{proof}
It is known that if $X$ is accretive-dissipative operator, then \cite{mir}
\[{{\left\| X \right\|}^{2}}\le {{\left\| \mathcal RX \right\|}^{2}}+{{\left\| \mathcal IX \right\|}^{2}}.\]
Using this fact and the inequality \eqref{09}, we have
\[\begin{aligned}
   \left\| \left[ \begin{matrix}
   {{T}_{11}} & {{T}_{12}}  \\
   {{T}_{21}} & {{T}_{22}}  \\
\end{matrix} \right] \right\|&=
 {{\left\| \left[ \begin{matrix}
   {{A}_{11}} & {{A}_{12}}  \\
   A_{12}^{*} & {{A}_{22}}  \\
\end{matrix} \right]+\textup i\left[ \begin{matrix}
   {{B}_{11}} & {{B}_{12}}  \\
   B_{12}^{*} & {{B}_{22}}  \\
\end{matrix} \right] \right\|}^{2}}\\
 &\qquad \text{(by the second inequality in \eqref{14})}\\ 
 & \le {{\left\| \left[ \begin{matrix}
   {{A}_{11}} & {{A}_{12}}  \\
   A_{12}^{*} & {{A}_{22}}  \\
\end{matrix} \right] \right\|}^{2}}+{{\left\| \left[ \begin{matrix}
   {{B}_{11}} & {{B}_{12}}  \\
   B_{12}^{*} & {{B}_{22}}  \\
\end{matrix} \right] \right\|}^{2}} \\ 
 & \le {{\left( \left\| {{A}_{11}} \right\|+\left\| {{A}_{22}} \right\| \right)}^{2}}+{{\left( \left\| {{B}_{11}} \right\|+\left\| {{B}_{22}} \right\| \right)}^{2}},  
\end{aligned}\]
from which the result follows.
\end{proof}

\section*{Data availability} There is no associated data with this work.
\section*{Competing interest} The authors declare that they have no competing interest.

\vskip 0.3 true cm

\noindent{\tiny (F. Kittaneh) Department of Mathematics, The University of Jordan, Amman, Jordan.

\noindent \textit{E-mail address:} fkitt@ju.edu.jo}

\vskip 0.3 true cm 

\noindent{\tiny (H. R. Moradi) Department of Mathematics, Payame Noor University (PNU), P.O. Box, 19395-4697, Tehran, Iran.
	
\noindent	\textit{E-mail address:} hrmoradi@mshdiau.ac.ir}

\vskip 0.3 true cm 

\noindent{\tiny (M. Sababheh) Vice president, Princess Sumaya University for Technology, Amman, Jordan.}
	
\noindent	{\tiny\textit{E-mail address:} sababheh@psut.edu.jo; sababheh@yahoo.com}

\end{document}